\numberwithin{equation}{section}
\newcommand{\R}{\mathbb{R}}
\newcommand{\N}{\mathbb{N}}
\newcommand{\abs}[1]{\mathord{\left|#1\right|}}
\newcommand{\zabs}[1]{\mathord{\left|\;#1\right|}}
\newcommand{\eps}{\varepsilon}
\newcommand{\half}{\frac{1}{2}}
\newcommand{\zsum}{\sideset{}{^*}\sum}
\DeclareMathOperator{\li}{li}
\newtheorem{prop}{Proposition}
\newtheorem{thm}{Theorem}
\newtheorem{lemma}{Lemma}
\newcommand{\lce}{\lambda_{c,\eps}}
\newcommand{\nce}{\eta_{c,\eps}}
\newcommand{\CE}{\mathcal{E}}
\title[Estimates for $\pi(x)$ et al]{Estimating $\pi(x)$ and related functions under partial RH assumptions}
\author{Jan Büthe}
\address{Mathematisches Institut, Endenicher Allee 60, 53115 Bonn}
\email{jbuethe@math.uni-bonn.de}
\subjclass[2010]{Primary 11N05, Secondary 11M26}
\date{\today}
\begin{document}
\begin{abstract}
We give a direct interpretation of the validity of the Riemann hypothesis for all zeros with $\Im(\rho)\in(0,T]$ in terms of the prime-counting function $\pi(x)$, by proving that Schoenfeld's explicit estimates for $\pi(x)$ and the Chebyshov functions hold as long as $4.92\sqrt{x/\log(x)} \leq T$.

We also improve some of the existing bounds of Chebyshov type for the function $\psi(x)$.
\end{abstract}

\maketitle

\section{introduction}
The Riemann hypothesis has been subject to numerous numerical verifications, which typically lead to statements of the form \textit{the first $n$ complex zeros of the Riemann zeta function are  simple and lie on the critical line $\Re(s)=1/2$}; see e.g. \cite{Brent79}.

Whilst such results are used as an ingredient in many estimates for functions of prime numbers, it is the purpose of this paper to give a direct interpretation in terms of the prime-counting function $\pi(x)$. This is done by proving the well-known Schoenfeld bound
\begin{equation*}
\abs{\pi(x) - \li(x)} \leq \frac{\sqrt{x}}{8\pi}\log(x) \quad\quad\text{for $x>2657$},
\end{equation*}
which is implied by the Riemann hypothesis \cite{Schoenfeld76}, holds for $4.92\sqrt{x/\log(x)} \leq T$ conditional on the Riemann hypothesis being valid for $0<\Im(\rho)\leq T$. We also prove equivalent statements for the Riemann prime-counting function and the Chebyshov functions.

These results also have practical relevance, since calculating the zeros up to height $T$ with fast methods like the Odlyzko-Sch\"onhage algorithm has expected run time $O(T^{1+\eps})$ \cite{OS88}. Therefore, one obtains strong bounds for $\pi(x)$ for $x\leq x_1$ in expected run time $O(x_1^{1/2+\eps})$ if the Riemann hypothesis holds up to the according height.

Apart from this, we also improve part of the bounds for $\psi(x)$ given in \cite{FK15}.

\section{A modified Chebyshov function}
For $A\subset X$ let
\[
 \chi^*_A(x) = \begin{cases}
              1		&x\in A\setminus \partial A\\
	      1/2	&x\in\partial A\\
	      0		&x \in X\setminus \overline A.
             \end{cases}
\]
denote the normalized characteristic function. We intend to construct a continuous approximation to the (normalized) Chebyshov function
\[
\psi(x) = \sum_{p^m} \chi^*_{[0,x]}( \log p),
\]
for which we will prove an explicit formula similar to the von Mangoldt explicit formula
\begin{equation}\label{e:mangoldt-explicit-formula}
\psi(x) = x - \zsum_\rho \frac{x^\rho}{\rho} - \log(2\pi) - \frac 12 \log(1-x^{-2}),
\end{equation}
where the sum is taken over all non-trivial zeros (according to their multiplicity) of the Riemann zeta function and  the $*$ indicates that the sum is computed as
\[
\lim_{T\to\infty} \sum_{\abs{\Im(\rho)}<T} \frac{x^\rho}{\rho}
\]
\cite{Mangoldt95}.

To this end, we use the Fourier transform of the Logan function 
\begin{equation*}
\ell_{c,\eps}(\xi) = \frac{c}{\sinh c}\frac{\sin(\sqrt{(\xi\eps)^2-c^2})}{\sqrt{(\xi\eps)^2-c^2}},
\end{equation*}
a sharp cuttoff filter kernel \cite{Logan88}, which will allow us to flexibly control the truncation point and the size of the remainder term of the sum over zeros. The Fourier transform is given by
\begin{equation}\label{e:eta-def}
\eta_{c,\eps}(t) = \frac{1}{2\pi}\int_\R e^{-it\xi} \ell_{c,\eps}(\xi)\, d\xi =\chi^*_{[-\eps,\eps]}(t) \frac{c}{2\eps\sinh c} I_0\bigl(c\sqrt{1-(t/\eps)^2}\bigr)
\end{equation}
where $I_0(t) = \sum_{n=0}^\infty (t/2)^{2n}/(n!)^2$ denotes the 0-th modified Bessel function of the first kind \cite{FKBJ}.

Now let $\lambda_{c,\eps} = \ell_{c,\eps}(i/2)$ and let
\begin{equation*}
\varphi_{x,c,\eps} = \frac{1}{\lambda_{c,\eps}} \bigl(\chi_{[0,\log x]} \exp(\cdot/2)\bigl)\ast \eta_{c,\eps},
\end{equation*}
where, as usual,
\[
f\ast g(x) := \int_\R f(y)g(x-y)\, dy
\]
denotes the convolution of two functions. Then we define the modified Chebyshov function by
\begin{equation*}
\psi_{c,\eps}(x) = \sum_{p^m} \frac{\log p}{p^{m/2}} \varphi_{x,c,\eps}(m\log p).
\end{equation*}

\begin{prop}\label{p:psi-psice}
Let $\eps < 1/10$ and let
\begin{multline}\label{e:M-def}
M_{x,c,\eps} (t) = \frac{\log t}{\lambda_{c,\eps}} \Bigl[\chi^*_{[x,\exp(\eps) x]}(t) \int_{-\eps}^{\log(t/x)}\eta_{c,\eps}(\tau) e^{-\tau/2}\, d\tau \\
- \chi^*_{[\exp(-\eps)x,x]}(t) \int_{\log(t/x)}^\eps \eta_{c,\eps}(\tau) e^{-\tau/2}\, d\tau
\Bigr].
\end{multline}
Then we have
\begin{equation}\label{e:psi-psice-equal}
\psi(x) = \psi_{c,\eps}(x) - \sum_{e^{-\eps}x < p^m < e^{\eps}x} \frac{1}{m}M_{x,c,\eps}(p^m).
\end{equation}
Moreover, we have
\begin{align}
\psi(e^{-\alpha\eps} x) &\leq \psi_{c,\eps}(x) - \sum_{e^{-\eps}x< p^m \leq e^{-\alpha\eps} x } \frac{1}{m} M_{x,c,\eps}(p^{m})\label{e:psi-psice-upper} \\
\intertext{and}
\psi(e^{\alpha\eps} x) &\geq \psi_{c,\eps}(x) - \sum_{e^{\alpha\eps}x \leq p^m <e^{\eps} x } \frac{1}{m} M_{x,c,\eps}(p^{m})\label{e:psi-psice-lower}
\end{align}
for every $\alpha >0$.
\end{prop}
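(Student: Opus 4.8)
The plan is to evaluate the convolution defining $\varphi_{x,c,\eps}$ in closed form and then read off all three assertions by summing over prime powers. Set $F(u):=e^{-u/2}\varphi_{x,c,\eps}(u)$, so that $\psi_{c,\eps}(x)=\sum_{p^{m}}(\log p)\,F(m\log p)$ directly. Substituting $\tau=u-y$ in the convolution and using that $\eta_{c,\eps}$ is supported in $[-\eps,\eps]$ gives
\[
F(u)=\frac{1}{\lambda_{c,\eps}}\int_{\max(-\eps,\;u-\log x)}^{\min(\eps,\;u)}\eta_{c,\eps}(\tau)\,e^{-\tau/2}\,d\tau,
\]
the integral to be read as $0$ whenever its lower limit exceeds its upper limit. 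The only fact about $\lambda_{c,\eps}$ that enters is $\int_{-\eps}^{\eps}\eta_{c,\eps}(\tau)e^{-\tau/2}\,d\tau=\ell_{c,\eps}(i/2)=\lambda_{c,\eps}$, which is \eqref{e:eta-def} together with Fourier inversion. Since moreover $\eta_{c,\eps}\ge 0$ (immediate from \eqref{e:eta-def}, as $I_{0}\ge 0$ and the prefactor is positive), the displayed formula gives $0\le F\le 1$ on $\R$, with $F\equiv 1$ on $[\eps,\log x-\eps]$ and $F\equiv 0$ on $[\log x+\eps,\infty)$; here one uses $\log x>2\eps$, harmless in the intended range.

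I would then specialize to prime powers. Every prime power $p^{m}$ has $m\log p\ge\log 2>\eps$, so the lower transition of $\varphi_{x,c,\eps}$ never intervenes; thus $F(m\log p)=1$ for $p^{m}<e^{-\eps}x$, $F(m\log p)=0$ for $p^{m}>e^{\eps}x$, and $F(m\log p)=\lambda_{c,\eps}^{-1}\int_{\log(p^{m}/x)}^{\eps}\eta_{c,\eps}(\tau)e^{-\tau/2}\,d\tau$ when $e^{-\eps}x<p^{m}<e^{\eps}x$. A short computation --- separating the cases $p^{m}<x$ and $p^{m}>x$, and in the former using $\int_{-\eps}^{\eps}\eta_{c,\eps}e^{-\tau/2}=\lambda_{c,\eps}$ to re-express the truncated integral --- shows that
\[
(\log p)\bigl(F(m\log p)-\chi^{*}_{[0,x]}(p^{m})\bigr)=\tfrac{1}{m}\,M_{x,c,\eps}(p^{m})
\]
for every prime power, the two summands of \eqref{e:M-def} being precisely the contributions of $p^{m}>x$ and $p^{m}<x$, and the half-weights $\chi^{*}$ absorbing the boundary cases $p^{m}\in\{e^{-\eps}x,\,x,\,e^{\eps}x\}$. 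Summing over all $p^{m}$ --- only finitely many terms being nonzero, since $F$ vanishes past $e^{\eps}x$ --- and using $\psi(x)=\sum_{p^{m}}(\log p)\,\chi^{*}_{[0,x]}(p^{m})$ gives \eqref{e:psi-psice-equal}.

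For \eqref{e:psi-psice-upper} and \eqref{e:psi-psice-lower} the precise form of $M_{x,c,\eps}$ is no longer needed --- only $0\le F\le 1$, the monotonicity of $\psi$, and the displayed per-term identity. For the upper bound, split $\psi_{c,\eps}(x)=\sum_{p^{m}}(\log p)F(m\log p)$ at $e^{-\alpha\eps}x$: the part over $p^{m}>e^{-\alpha\eps}x$ is $\ge 0$ and may be dropped; on $e^{-\eps}x<p^{m}\le e^{-\alpha\eps}x$ one has $p^{m}<x$, so $(\log p)F(m\log p)=\log p+\tfrac1m M_{x,c,\eps}(p^{m})$ by the per-term identity, while $(\log p)F(m\log p)=\log p$ for $p^{m}\le e^{-\eps}x$; collecting terms yields $\psi_{c,\eps}(x)\ge\psi(e^{-\alpha\eps}x)+\sum_{e^{-\eps}x<p^{m}\le e^{-\alpha\eps}x}\tfrac1m M_{x,c,\eps}(p^{m})$, i.e.\ \eqref{e:psi-psice-upper}. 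Inequality \eqref{e:psi-psice-lower} is the mirror argument: bound $F\le 1$ for $p^{m}<e^{\alpha\eps}x$, keep the exact values on $e^{\alpha\eps}x\le p^{m}<e^{\eps}x$ (where $p^{m}>x$, so $(\log p)F(m\log p)=\tfrac1m M_{x,c,\eps}(p^{m})$), and use $F\equiv 0$ past $e^{\eps}x$. In both inequalities the case $\alpha\ge 1$ is trivial, the intermediate range being empty.

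I expect the one genuinely delicate step to be the closed-form evaluation of $\varphi_{x,c,\eps}$ in the transition window, together with the bookkeeping of the normalized characteristic functions at the endpoints $e^{-\eps}x,\,x,\,e^{\eps}x$; once the normalization $\int_{-\eps}^{\eps}\eta_{c,\eps}(\tau)e^{-\tau/2}\,d\tau=\lambda_{c,\eps}$ is in hand, the remainder is routine.
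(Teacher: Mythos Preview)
Your proof is correct and follows exactly the same route as the paper's: the key inputs are the convolution identity $\exp(\cdot/2)\ast\eta_{c,\eps}=\lambda_{c,\eps}\exp(\cdot/2)$ (equivalently, your normalization $\int_{-\eps}^{\eps}\eta_{c,\eps}(\tau)e^{-\tau/2}\,d\tau=\lambda_{c,\eps}$), the compact support of $\eta_{c,\eps}$, and its nonnegativity---you simply unpack these into an explicit formula for $F(u)=e^{-u/2}\varphi_{x,c,\eps}(u)$ and a per-term comparison, whereas the paper states them and leaves the bookkeeping to the reader. One small remark: when you actually carry out the ``short computation'' you will find that the two integration ranges in the displayed definition of $M_{x,c,\eps}$ appear to be interchanged (the $\chi^{*}_{[x,e^{\eps}x]}$ term should carry $\int_{\log(t/x)}^{\eps}$ and the $\chi^{*}_{[e^{-\eps}x,x]}$ term should carry $\int_{-\eps}^{\log(t/x)}$); this is consistent with how $M_{x,c,\eps}$ is used later via $\mu_{c,\eps}$, so it is a typographical slip in the statement rather than a flaw in your argument.
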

\begin{proof}
The identity \eqref{e:psi-psice-equal} follows directly from
\begin{equation*}
 \exp(\cdot/2)\ast \eta_{c,\eps} (t) = \lambda_{c,\eps} \exp(t/2). 
\end{equation*}
and from $\eta_{c,\eps}$ being compactly supported on $[-\eps,\eps]$. The inequalities \eqref{e:psi-psice-upper} and \eqref{e:psi-psice-lower} then follow from \eqref{e:psi-psice-equal}, since \eqref{e:eta-def} implies $\eta_{c,\eps}(t)\geq 0$. 
\end{proof}

\section{The explicit formula}
The modified Chebyshov function satisfies an explicit formula similar to \eqref{e:mangoldt-explicit-formula}, of which we prove an approximate version.

\begin{prop}\label{p:exp-form}
Let $0 < \eps < 1/10$ and let  $\log(x) > 2/\abs{\log\eps}$. We define 
\[C_1 = -\gamma/2 - 1 - \log(\pi)/2\]
and
\[
a_{c,\eps}(\rho) = \frac{1}{\lambda_{c,\eps}} \ell_{c,\eps}\Bigl(\frac{\rho}{i} - \frac{1}{2i}\Bigr).
\]
Then we have
\begin{equation}\label{e:exp-form}
\psi_{c,\eps}(x) = x - \sum_\rho a_{c,\eps}(\rho) \frac{x^\rho-1}{\rho} + C_1 - \frac{1}{2}\log(1-x^{-2}) + \Theta(8 \eps\abs{\log\eps}).
\end{equation}
\end{prop}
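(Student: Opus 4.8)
The plan is to start from the definition
\[
\psi_{c,\eps}(x) = \sum_{p^m} \frac{\log p}{p^{m/2}} \varphi_{x,c,\eps}(m\log p),
\]
unfold the convolution defining $\varphi_{x,c,\eps}$, and then apply a suitable form of the Weil/Riemann--von Mangoldt explicit formula with the test function $\varphi_{x,c,\eps}$. Since $\varphi_{x,c,\eps} = \lambda_{c,\eps}^{-1}\bigl(\chi_{[0,\log x]}\exp(\cdot/2)\bigr)\ast\eta_{c,\eps}$ and $\eta_{c,\eps}$ has Fourier transform $\ell_{c,\eps}$, the Mellin/Fourier data attached to $\varphi_{x,c,\eps}$ is essentially the product of $\ell_{c,\eps}$ with the transform of a truncated exponential; this is precisely why the zero-sum in \eqref{e:exp-form} comes out as $\sum_\rho a_{c,\eps}(\rho)\frac{x^\rho-1}{\rho}$ with $a_{c,\eps}(\rho) = \lambda_{c,\eps}^{-1}\ell_{c,\eps}(\rho/i - 1/(2i))$: the factor $(x^\rho-1)/\rho$ is the transform of $\chi_{[0,\log x]}\exp(\cdot/2)$ evaluated at $\rho$, shifted by the $e^{t/2}$ weight, and $a_{c,\eps}(\rho)$ is the Logan-kernel factor. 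First I would make this precise by writing $\psi_{c,\eps}(x) = \lambda_{c,\eps}^{-1}\sum_{p^m}\Lambda$-weights times $\int \chi_{[0,\log x]}(y)e^{y/2}\eta_{c,\eps}(m\log p - y)\,dy$, recognize the inner object as a smoothed indicator, and compare with the unsmoothed sum $\psi(x)$ via Proposition~\ref{p:psi-psice}.

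The cleanest route is probably: (i) apply the exact von Mangoldt formula \eqref{e:mangoldt-explicit-formula} to get $\psi$ on a grid of points $e^{\tau}x$, $\tau\in[-\eps,\eps]$, then (ii) integrate against $\eta_{c,\eps}(\tau)e^{-\tau/2}$ using \eqref{e:psi-psice-equal}, which converts $\psi(x)$ into $\psi_{c,\eps}(x)$ up to the correction sum $\sum \frac1m M_{x,c,\eps}(p^m)$, and (iii) show that correction sum is absorbed into the error term $\Theta(8\eps|\log\eps|)$. On the main-term side, $\int_{-\eps}^{\eps} e^{\tau}x\,\eta_{c,\eps}(\tau)e^{-\tau/2}\,d\tau = \lambda_{c,\eps}x\cdot\lambda_{c,\eps}^{-1}\cdots$ — more carefully, since $\exp(\cdot/2)\ast\eta_{c,\eps} = \lambda_{c,\eps}\exp(\cdot/2)$, the leading term $x$ is reproduced exactly and the constant $-\log(2\pi)$ together with the $-\frac12\log(1-x^{-2})$ term deform into $C_1 - \frac12\log(1-x^{-2})$ plus controlled error (this is where the specific value $C_1 = -\gamma/2 - 1 - \log\pi/2$ is pinned down, via $\int\eta_{c,\eps} = \ell_{c,\eps}(0)$ and the small-$\eps$ expansion of $\lambda_{c,\eps}$). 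On the zero side, each term $-x^\rho/\rho$ gets multiplied by $\lambda_{c,\eps}^{-1}\int_{-\eps}^{\eps}\eta_{c,\eps}(\tau)e^{\rho\tau - \tau/2}\,d\tau = a_{c,\eps}(\rho)$, and the $-1/\rho$ piece appears because the truncation at $y=0$ in $\chi_{[0,\log x]}$ contributes the lower endpoint; summing gives $-\sum_\rho a_{c,\eps}(\rho)(x^\rho-1)/\rho$. Interchanging the zero-sum with the $\tau$-integration needs the conditional convergence of the von Mangoldt sum to be respected, so I would either work with the truncated sum $\sum_{|\Im\rho|<T}$ throughout and pass to the limit at the end, or invoke absolute convergence of $\sum_\rho a_{c,\eps}(\rho)x^\rho/\rho$ (which holds because $\ell_{c,\eps}$ decays like a sharp cutoff).

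The error term $\Theta(8\eps|\log\eps|)$ comes from three sources that I would bound separately: first, the correction sum $\sum_{e^{-\eps}x<p^m<e^{\eps}x}\frac1m M_{x,c,\eps}(p^m)$, which counts prime powers in a short interval of multiplicative width $2\eps$ weighted by $\log t/\lambda_{c,\eps}$ times an integral of $\eta_{c,\eps}e^{-\tau/2}$ over a sub-interval — by the trivial bound $\sum_{e^{-\eps}x<p^m<e^{\eps}x}\Lambda(p^m) \approx 2\eps x$ (valid once $\log x$ is large enough, hence the hypothesis $\log x > 2/|\log\eps|$) combined with $M$ being $O(\eps\log x / x)$ in the relevant range after dividing by the $t\approx x$ factor; second, the error in replacing the discrete von Mangoldt constant terms by their smoothed analogues, which is $O(\eps)$ from the Taylor expansion of $e^{-\tau/2}$ and of $\frac12\log(1-(e^\tau x)^{-2})$ across $|\tau|\le\eps$; and third, the deviation $\lambda_{c,\eps}^{-1}\ell_{c,\eps}(0) - 1$ and similar normalization slips, each $O(\eps)$ or better. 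The main obstacle I anticipate is the \emph{first} source: getting the bound on the short-interval prime-power sum to be genuinely of size $O(\eps|\log\eps|\cdot\text{(unit)})$ rather than $O(\eps x)$ requires that the weight $M_{x,c,\eps}(t)/$(the ambient $x$) is itself $O(|\log\eps|/x)$ uniformly, which forces a careful estimate of $\int_{-\eps}^{\log(t/x)}\eta_{c,\eps}(\tau)e^{-\tau/2}\,d\tau$ near the endpoints of the support where $I_0(c\sqrt{1-(\tau/\eps)^2})$ is smooth but the integration length is short — the $|\log\eps|$ factor presumably enters exactly here, from $\log x$ being comparable to $|\log\eps|$ at the threshold $\log x = 2/|\log\eps|$, or from a logarithmic integral of $1/t$ over the short interval. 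Tracking the constant $8$ through these three contributions is then a matter of being slightly generous in each of the three $O(\eps)$-type bounds.
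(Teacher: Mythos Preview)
There is a genuine gap. Your plan routes through Proposition~\ref{p:psi-psice} and makes the short-interval correction sum $\sum_{e^{-\eps}x<p^m<e^{\eps}x}\frac{1}{m}M_{x,c,\eps}(p^m)$ the main source of the $\Theta(8\eps|\log\eps|)$ error. That cannot work: from the definition \eqref{e:M-def} one has $|M_{x,c,\eps}(t)|\asymp \log t$ for $t$ well inside $(e^{-\eps}x,e^{\eps}x)$, and there are about $2\eps x/\log x$ primes in that interval, so the correction sum is of order $\eps x$, not $\eps|\log\eps|$. Indeed \eqref{e:psi-psice-equal} says this sum \emph{equals} $\psi_{c,\eps}(x)-\psi(x)$, which is generically of size $\eps x$; no estimate will shrink it to $O(\eps|\log\eps|)$. (Your dimensional bookkeeping ``$M$ being $O(\eps\log x/x)$'' is off by a factor of $x/\eps$.) Relatedly, if you simply integrate the von Mangoldt formula against $\lambda_{c,\eps}^{-1}\eta_{c,\eps}(\tau)e^{-\tau/2}$, the left-hand side becomes $\psi_{c,\eps}(x)$ \emph{directly}, with no correction sum at all; Proposition~\ref{p:psi-psice} plays no role here.

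The paper's proof avoids this entirely by applying the Weil--Barner explicit formula to the smooth test function $\varphi_{x,c,\eps}$ and then splitting only the archimedean term as $w_\infty(\varphi_{x,c,\eps})=w_\infty(f_x)+w_\infty(\Delta)$ with $\Delta=\varphi_{x,c,\eps}-f_x$. The spectral and finite parts give \eqref{e:ws_phi} and \eqref{e:wf_phi} exactly (this is where the specific form $(x^\rho-1)/\rho$ and the constant $C_1$ arise, via $\hat f_x(i/2-i\rho)=(x^\rho-1)/\rho$ and an exact evaluation of $w_\infty(f_x)$). The entire error $\Theta(8\eps|\log\eps|)$ comes from $w_\infty(\Delta)$, i.e.\ from the integral $\int_0^\infty\frac{\Delta(t)+\Delta(-t)-2\Delta(0)}{1-e^{-2t}}e^{-t/2}\,dt$: the $|\log\eps|$ factor enters because $\Delta$ is supported on $B_\eps(0)\cup B_\eps(\log x)$ and the dominant piece is $2|\Delta(0)|\int_\eps^\infty\frac{e^{-t/2}}{1-e^{-2t}}\,dt\asymp\eps|\log\eps|$, a purely archimedean phenomenon having nothing to do with primes in short intervals. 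Your intuition that ``the $|\log\eps|$ factor comes from a logarithmic integral of $1/t$ over a short interval'' is correct, but the integral is the one in $w_\infty$, not a prime sum.
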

\begin{proof}
Let
\[f_x(t) = \chi^*_{[0,\log x]}(t) \exp(t/2)\]
so that we have $\varphi_{x,c,\eps} = \lambda_{c,\eps}^{-1} f_x\ast \eta_{c,\eps}.$ The assertion of the proposition follows by applying the Weil-Barner explicit formula \cite{Barner81}
\[
w_s(\hat f) = w_f(f) + w_\infty(f),
\]
where
\begin{align*}
w_s(\hat f) &= \zsum_{\rho} \hat f(i/2-i\rho) - \hat f(i/2) - \hat f(i/2), \\
w_f(f) &= -\sum_{p} \sum_{m=0}^\infty \frac{\log p}{p^{m/2}}\bigl(f(m\log p) + f(-m\log p)\bigr), \\
w_\infty(f) &= \Bigl(\frac{\Gamma'}{\Gamma}(1/4) - \log \pi\Bigr) f(0) - \int_{0}^\infty \frac{f(t) + f(-t) - 2f(0)}{1-e^{-2t}} e^{-t/2}\, dt,
\end{align*}
and where
\[
\hat f(\xi) = \int_{-\infty}^\infty e^{i\xi t} f(t)\, dt,
\]
to the function $\varphi_{x,c,\eps}$.

Let $\Delta = \varphi_{x,c,\eps} - f_x$ and assume $x>2/\abs{\log(\eps)}$. It then suffices to prove the following identities:
\begin{eqnarray}
&w_s(\hat \varphi_{x,c,\eps}) &=\sum_{\rho} a_{c,\eps}(\rho) \frac{x^\rho-1}{\rho} - x - \log x + 1 \label{e:ws_phi}\\
&w_f(\varphi_{x,c,\eps}) &= -\psi_{c,\eps}(x) \label{e:wf_phi}\\
&w_\infty(f_x) &= - \log x - \frac \gamma 2 - \half \log \pi - \half \log(1-x^{-2})\label{e:w8_f} \\
&w_\infty(\Delta) &= \Theta(8 \eps\abs{\log\eps})\label{e:w8_d}.
\end{eqnarray}

The identities \eqref{e:ws_phi} and \eqref{e:wf_phi} follow directly from the definitions of the functionals. So we begin with the proof of \eqref{e:w8_f}. We have
\begin{equation*}
w_\infty(f_x) = \half\frac{\Gamma'}{\Gamma}(1/4) - \half \log \pi - \int_0^{\log x} \frac{1-e^{-t/2}}{1-e^{-2t}}\, dt + \int_{\log x}^\infty \frac{e^{-t/2}}{1-e^{-2t}}\, dt.
\end{equation*}
Using
\begin{equation*}
\half\frac{\Gamma'}{\Gamma}(1/4) = \int_0^\infty \frac{e^{-2t}}{t}-\frac{e^{-t/2}}{1-e^{-2t}}\, dt
\end{equation*}
and
\begin{equation*}
- \int_0^{\log x} \frac{1-e^{-t/2}}{1-e^{-2t}}\, dt = - \log x + \int_0^{\log x} \frac{e^{-t/2}-e^{-2t}}{1-e^{-2t}}\,dt,
\end{equation*}
we get
\begin{align*}
w_\infty(f_x) &= -\log x - \half \log \pi + \int_0^{\log x} \frac{e^{-2t}}{2t} -  \frac{e^{-2t}}{1-e^{-2t}}\,dt + \int_{\log x}^\infty \frac{e^{-2t}}{2t}\, dt\\
&= -\log x - \half \log \pi - \half \log(1-x^{-2}) + \half \lim_{\delta\searrow 0} \bigl(E_1(2\delta) - \log(1-e^{-2\delta})\bigr),
\end{align*}
where
\[
E_1(y) = \int_{y}^\infty \frac{e^{-t}}{t}\,dt
\]
denotes the first exponential integral. Since
\begin{equation*}
 E_1(y) = -\gamma - \log(y) + O(y)
\end{equation*}
holds for $y\searrow 0$ \cite[p. 40]{Olver97}, we get
\begin{equation*}
\lim_{\delta\searrow 0} \bigl(E_1(2\delta) - \log(1-e^{-2\delta})\bigr) = -\gamma +\log\Bigl(\frac{1-e^{-2\delta}}{2\delta}\Bigr) = - \gamma,
\end{equation*}
which concludes the proof of \eqref{e:w8_f}.

It remains to show \eqref{e:w8_d}, and we start by bounding $\Delta(t)$:

\begin{lemma}
Let $\eps$ and $x$ be as in the proposition. Then $\Delta(t)$ vanishes for $t\notin B_\eps(0)\cup B_\eps(\log x)$. Moreover, we have
\begin{equation}\label{e:dtmt}
\Delta(t) + \Delta(-t) = 2\Delta(0) + \Theta(2t)\quad\quad\text{for $0\leq t\leq \eps$,}
\end{equation}
\begin{equation}\label{e:dbex}
\abs{\Delta(t)} \leq \frac{1}{2}e^\eps \sqrt{x}\quad\quad\text{for $t\in B_\eps(\log x)$,}
\end{equation}
and
\begin{equation}\label{e:d0}
\abs{\Delta(0) } \leq \eps.
\end{equation}
\end{lemma}
\begin{proof}
Under the conditions imposed on $x$ and $\eps$, we have $B_\eps(0)\cap B_\eps(\log x) = \emptyset$ and
\begin{equation}\label{e:exp-approx}
e^{t+\tau} = e^t + \Theta(2\abs\tau)
\end{equation}
for $\max\{\abs t,\abs \tau\} \leq \eps$.

Since $\exp(\cdot /2)\ast \eta_{c,\eps}(t) = \lambda_{c,\eps} \exp(t /2)$ this gives
\[
\Delta(0) = \frac{1}{2\lambda_{c,\eps}}\int_0^\eps \eta_{c,\eps}(\tau)\bigl( e^{\tau/2} - e^{-\tau/2}\bigr)\, d\tau = \Theta(\eps),
\]
so we get \eqref{e:d0}. Moreover, we have
\begin{align*}
\Delta(t) + \Delta(-t) &= \frac{1}{\lambda_{c,\eps}} \int_t^\eps \eta_{c,\eps}(\tau) \bigl(e^{\frac{\tau - t}{2}} - e^{\frac{t-\tau}{2}}\bigr)\, dt\\
&= \frac{1}{\lambda_{c,\eps}} \int_t^\eps \eta_{c,\eps}(\tau) \bigl(e^{\tau/2} - e^{-\tau/2}\bigr)\, dt + \Theta(t) \\
&= \frac{1}{\lambda_{c,\eps}} \int_0^\eps \eta_{c,\eps}(\tau) \bigl(e^{\tau/2} - e^{-\tau/2}\bigr)\, dt + \Theta(2t), \\
\end{align*}
which gives \eqref{e:dtmt}. The remaining inequality \eqref{e:dbex} follows easily from
\[
\Delta(t) = \frac{\chi_{(\log x,\infty)}(t)}{\lambda_{c,\eps}} \int_{t-\log x}^\eps \eta_{c,\eps}(\tau) e^{\frac{t-\tau}{2}}\, d\tau - \frac{\chi_{(0,\log x)(t)}}{\lambda_{c,\eps}} \int_{-\eps}^{t-\log x} \eta_{c,\eps}(\tau) e^{\frac{t-\tau}{2}}\, d\tau,
\]
which holds for $t\in B_\eps(\log x)$.
\end{proof}

Now, we divide the integral in $w_\infty(\Delta)$ as follows
\begin{multline}\label{e:w8Delta-Integral}
\int_{0}^\infty \frac{\Delta(t)+\Delta(-t)-2\Delta(0)}{1-e^{-2t}}e^{-t/2}\, dt = \int_{0}^\eps \frac{\Delta(t)+\Delta(-t)-2\Delta(0)}{1-e^{-2t}}e^{-t/2}\, dt \\ - 2 \int_\eps^\infty  \frac{\Delta(0)}{1-e^{-2t}}e^{-t/2}\, dt + \int_{B_\eps(\log x)} \frac{\Delta(t)}{1-e^{-2t}}e^{-t/2}\, dt.
\end{multline}
Since the mapping $t\mapsto  \frac{1-\exp(-2t)}{t}$ is monotonously decreasing in $(0,\infty)$, we have
\begin{equation}\label{e:denom-lower}
1-e^{-2t} \geq 1.8\, t
\end{equation}
for $0 \leq t \leq \eps \leq 0.1.$ So, using \eqref{e:dtmt}, we obtain the bound
\begin{equation*}
\int_{0}^\eps \frac{\abs{\Delta(t)+\Delta(-t)-2\Delta(0)}}{1-e^{-2t}}e^{-t/2}\, dt \leq \int_{0}^\eps \frac{2t}{1.8\, t}\, dt \leq 1.2\,\eps
\end{equation*}
for the first integral on the right hand side of \eqref{e:w8Delta-Integral}.

For the second integral we use \eqref{e:d0} and the bound $\abs{\log\eps} \geq 2.3$, which gives
\begin{equation*}
2 \abs{\Delta(0)} \int_{\eps}^\infty \frac{e^{-t/2}}{1-e^{-t}}\,dt \leq 2 \abs{\Delta(0)} \, \abs{\log\frac{e^{\eps/2}-1}{e^{\eps/2}+1}} 
\leq 2\,\eps\, \abs{\log\frac{\eps}{2\cdot 2.1}} \leq 3.4\,\eps\, \abs{\log\eps}.
\end{equation*}

It remains to bound the third integral on the right hand side of \eqref{e:w8Delta-Integral}. From \eqref{e:denom-lower} we get
\begin{equation*}
1-e^{-2t} \geq 1 - \exp(2\eps-2\log x) \geq 1 - \exp\Bigl(-\frac{4}{\abs{\log\eps}}\Bigr) \geq \frac{2}{\abs{\log\eps}}
\end{equation*}
for $t\in B_\eps(\log x)$ which, together with \eqref{e:dbex}, implies
\begin{equation*}
\int_{B_\eps(\log x)} \frac{\abs{\Delta(t)}}{1-e^{-2t}}e^{-t/2}\, dt \leq \half e^{\eps/2}\sqrt{x}\frac{e^{\eps/2}}{\sqrt{x}} \int_{B_\eps(\log x)} \frac{dt}{1-e^{-2t}} \leq  \eps\, \abs{\log\eps}.
\end{equation*}

By the Gau\ss -Digamma theorem  \cite[Theorem 1.2.7]{Andrews1999}, we have
\begin{equation*}
\frac{\Gamma'}{\Gamma}(1/4) = -\gamma - \frac{\pi}{2} -3\log 2,
\end{equation*}
so \eqref{e:d0} gives the bound
\begin{equation*}
\abs{\Bigl(\frac{\Gamma'}{\Gamma}(1/4) - \log \pi\Bigr) \Delta(0)} \leq 5.4\,\eps
\end{equation*}
for the remaining summand in $w_\infty(\Delta)$. Therefore, we arrive at
\begin{equation*}
\abs{w_\infty(\Delta)} \leq \eps(5.4+1.2 +(3.4+1)\abs{\log\eps}) \leq 8\,\eps\,\abs{\log\eps},
\end{equation*}
which concludes the proof of the proposition.
\end{proof}

\section{Bounding the sum over zeros}
We provide several bounds for parts of the sum over zeros in the explicit formula for $\psi_{c,\eps}(x).$ First we truncate the sum, making use of the sharp cuttoff property of the Logan function.

\begin{prop}\label{s:psi-zsum-remainder}
Let $x>1$, $\eps\leq 10^{-3}$ and $c\geq 3$. Then we have
\begin{equation}\label{e:rem1}
\sum_{\abs{\Im(\rho)} > \frac c\eps} \abs{a_{c,\eps}(\rho) \frac{x^{\rho}}{\rho}} \leq 0.16 \frac{x+1}{\sinh(c)} e^{0.71\sqrt{c\eps}} \log(3c) \log\Bigl(\frac c\eps\Bigr).
\end{equation}

Furthermore, if $a\in(0,1)$ such that $a\frac{c}{\eps}\geq 10^{3}$ holds, and if the Riemann hypothesis holds for all zeros with imaginary part in $(0,\frac c\eps]$, then we have
\begin{equation}\label{e:rem2}
\sum_{\frac{ac}\eps <  \abs{\Im(\rho)} \leq \frac c\eps} \abs{a_{c,\eps}(\rho) \frac{x^{\rho}}{\rho}} \leq\frac{1+11c\eps}{\pi c a^2} \log\Bigl(\frac c\eps\Bigr)\frac{\cosh(c\sqrt{1-a^2})}{\sinh(c)} \sqrt{x}.
\end{equation}
\end{prop}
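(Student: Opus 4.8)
Throughout write $\rho=\beta+i\gamma$ and set $\xi=\rho/i-1/(2i)=\gamma-i(\beta-\tfrac12)$, so that $a_{c,\eps}(\rho)=\lce^{-1}\tfrac{c}{\sinh c}\tfrac{\sin w}{w}$ with $w=w(\rho)=\sqrt{(\xi\eps)^2-c^2}$ (principal branch) and
\[
(\xi\eps)^2-c^2=\eps^2\gamma^2-\eps^2(\beta-\tfrac12)^2-c^2-2i\,\eps^2\gamma(\beta-\tfrac12).
\]
I would build everything on three elementary facts: $\lce=\tfrac{c}{\sinh c}\tfrac{\sinh\sqrt{c^2+\eps^2/4}}{\sqrt{c^2+\eps^2/4}}\ge 1$ because $y\mapsto\sinh(y)/y$ is increasing; $\abs{\sin w}\le\cosh(\Im w)\le e^{\abs{\Im w}}$ from $\abs{\sin(u+iv)}^2=\sin^2u+\sinh^2v$; and $\abs{\sin w/w}\le\sinh\abs{w}/\abs{w}$ from the Weierstrass product of the sine. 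Since $\abs{w}^2=\abs{w^2}=\abs{(\xi\eps)^2-c^2}$ and $\abs{\Im w}^2=\tfrac12\bigl(\abs{(\xi\eps)^2-c^2}-\Re((\xi\eps)^2-c^2)\bigr)$, everything reduces to controlling the modulus and the real part of $(\xi\eps)^2-c^2$.

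For \eqref{e:rem1} I use only $0<\beta<1$, i.e.\ $\abs{\beta-\tfrac12}\le\tfrac12$. A short computation shows $\abs{\Im w}^2$ is increasing in $(\beta-\tfrac12)^2$ and — writing $\eps^2\gamma^2=c^2(1+\delta)$ — decreasing in $\delta\ge0$, so over the whole tail it is bounded by its value at the corner $\abs{\gamma}=c/\eps$, $\beta-\tfrac12=\pm\tfrac12$, namely $\tfrac12\bigl(\eps\sqrt{c^2+\eps^2/16}+\eps^2/4\bigr)\le 0.71^2\,c\eps$ for $\eps\le10^{-3}$, $c\ge3$; hence $\abs{\sin w}\le e^{0.71\sqrt{c\eps}}$ everywhere in the tail. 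For the modulus, as soon as $\eps^2\gamma^2>c^2+1$ one has $\abs{w}^2\ge\Re((\xi\eps)^2-c^2)\ge\eps^2\gamma^2-c^2-\eps^2/4\ge 1-\eps^2/4$, and then
\[
\abs{a_{c,\eps}(\rho)\,x^\rho/\rho}\le\frac{x}{\lce}\cdot\frac{c}{\sinh c}\cdot\frac{e^{0.71\sqrt{c\eps}}}{\abs{\gamma}\sqrt{\eps^2\gamma^2-c^2-\eps^2/4}}
\]
using $\abs{x^\rho}=x^\beta\le x$ and $\abs{\rho}\ge\abs{\gamma}$. The zeros with $c/\eps<\abs{\gamma}\le\sqrt{c^2+1}/\eps$ lie in an interval of length $\le1/(2c\eps)$, hence number $O(\eps^{-1}c^{-1}\log(c/\eps))$ by the standard density estimate, and are disposed of with the crude bound $\sinh\abs{w}/\abs{w}=O(1)$, contributing a lower-order amount. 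Finally, $\sum\abs{\rho}^{-1}(\eps^2\gamma^2-c^2-\eps^2/4)^{-1/2}$ over the remaining zeros is bounded by $\tfrac1\pi\int\gamma^{-1}(\eps^2\gamma^2-c^2-\eps^2/4)^{-1/2}\log(\gamma/2\pi)\,d\gamma$ plus an $O(\log(c/\eps))$ term from the $O(\log T)$ fluctuation in the explicit Riemann--von Mangoldt formula; the substitutions $t=\eps\gamma$ and then $u=\sqrt{t^2-c^2-\eps^2/4}$ reduce this to $\arctan$-type integrals that deliver a bound of the order $\sinh(c)^{-1}e^{0.71\sqrt{c\eps}}\log(3c)\log(c/\eps)$ stated in \eqref{e:rem1}.

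For \eqref{e:rem2} the assumed partial Riemann hypothesis forces every zero on the left-hand side to satisfy $\beta=\tfrac12$, so there $\xi=\gamma$ is real with $\abs{\eps\gamma}<c$ and
\[
\ell_{c,\eps}(\gamma)=\frac{c}{\sinh c}\cdot\frac{\sinh\sqrt{c^2-\eps^2\gamma^2}}{\sqrt{c^2-\eps^2\gamma^2}}>0.
\]
Using $\lce\ge1$, $\abs{x^\rho/\rho}\le\sqrt{x}/\abs{\gamma}$ and conjugate symmetry, the left-hand side is at most $2\sqrt{x}\sum_{ac/\eps<\gamma\le c/\eps}\ell_{c,\eps}(\gamma)/\gamma$; I would bound the sum by $\tfrac1{2\pi}\int_{ac/\eps}^{c/\eps}\ell_{c,\eps}(\gamma)\gamma^{-1}\log(\gamma/2\pi)\,d\gamma$ plus a fluctuation term which, by summation by parts, is $O(\log(c/\eps))$ times the total variation of the (monotone) function $\ell_{c,\eps}(\gamma)/\gamma$. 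In the main integral replace $\log(\gamma/2\pi)$ by $\log(c/\eps)$ and substitute $t=\eps\gamma$ to get $\tfrac{c}{\sinh c}\int_{ac}^{c}\tfrac{\sinh\sqrt{c^2-t^2}}{\sqrt{c^2-t^2}}\tfrac{dt}{t}$, and then $s=\sqrt{c^2-t^2}$ to get $\int_0^{c\sqrt{1-a^2}}\tfrac{\sinh s}{c^2-s^2}\,ds\le(c^2a^2)^{-1}\bigl(\cosh(c\sqrt{1-a^2})-1\bigr)$, since $c^2-s^2\ge c^2a^2$ on that range. Collecting constants gives the main term $\tfrac1{\pi ca^2}\log(c/\eps)\tfrac{\cosh(c\sqrt{1-a^2})}{\sinh c}\sqrt{x}$, while the error — essentially the $O(\log T)$ fluctuation of $N(T)$, plus the negligible loss from $\abs{\rho}\ge\abs{\gamma}$ — is $O(c\eps)$ relative to it; this is where $\eps\le10^{-3}$ and $ac/\eps\ge10^3$ enter, producing the factor $1+11c\eps$.

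The main obstacle is twofold. In \eqref{e:rem1} one has to pin down the constant $0.71$ by genuinely optimizing $\abs{\Im w}^2$ jointly over $\beta$ and $\gamma$ across the entire tail, and then dispose — cheaply but carefully — of the short window just above $\abs{\gamma}=c/\eps$ in which $\abs{w}$ is too small for the factor $1/\abs{w}$ to be useful. In \eqref{e:rem2} the real work is the bookkeeping: carrying the fluctuation error through both substitutions and verifying, with every constant explicit, that its contribution stays below $11c\eps$ times the main term.
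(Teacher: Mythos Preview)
Your approach is structurally sound and in fact more detailed than the paper's own argument, which is almost entirely a reduction to external sources. After establishing $\lce\ge1$ (from convexity of $e^{t/2}$ and nonnegativity of $\eta_{c,\eps}$, equivalent to your monotonicity-of-$\sinh(y)/y$ argument) and hence
\[
\Bigl|a_{c,\eps}(\rho)\,\frac{x^\rho}{\rho}\Bigr|\le x^{\Re(\rho)}\,\frac{\abs{\ell_{c,\eps}(\rho/i-1/(2i))}}{\abs{\Im(\rho)}},
\]
the paper invokes \cite[Lemma~5.5]{BuetheA} for \eqref{e:rem2} and a separate Lemma~\ref{l:logan-zsum} (a variant of \cite[Lemma~2.4]{FKBJ}, with the detailed proof deferred to \cite{BuetheDiss}) for \eqref{e:rem1}. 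Your direct analysis --- optimising $\abs{\Im w}$ over the tail, isolating a short window just above $\abs{\gamma}=c/\eps$, and converting the remaining zero-sum to an integral against the Riemann--von~Mangoldt density --- is presumably close to what those cited sources do, and you have correctly located where the residual work lies.

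One concrete point you miss: in \eqref{e:rem1} you bound $\abs{x^\rho}\le x$, which would at best yield $0.32\,x$ rather than $0.16(x+1)$. The improvement comes from pairing each $\rho=\beta+i\gamma$ with $1-\bar\rho=(1-\beta)+i\gamma$: since $(1-\bar\rho)/i-1/(2i)$ is the complex conjugate of $\rho/i-1/(2i)$ and $\ell_{c,\eps}$ has real Taylor coefficients, the two zeros carry the same value of $\abs{\ell_{c,\eps}}/\abs{\Im(\cdot)}$, so their combined contribution is $(x^\beta+x^{1-\beta})\abs{\ell}/\abs{\gamma}\le(x+1)\abs{\ell}/\abs{\gamma}$, which halves the constant $0.32$ coming from Lemma~\ref{l:logan-zsum}. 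The paper does mention this pairing, though the sentence attaches it to \eqref{e:rem2} (where, under the assumed partial RH, it is vacuous). Beyond this, your self-identified obstacles --- the short-window bookkeeping in \eqref{e:rem1} and tracking the fluctuation contribution to within $11c\eps$ in \eqref{e:rem2} --- are genuine, but the paper does not resolve them inline either; it simply delegates them to the external references.
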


\begin{proof}
Since $\exp(t/2)$ is convex and $\eta_{c,\eps}$ is non-negative and even, we have 
\[
 \lambda_{c,\eps}\exp(t/2) = \exp(\cdot/2)\ast\eta_{c,\eps}(t) \geq  \exp(t/2),
\]
and therefore $\lambda_{c,\eps}\geq 1$. Thus
\begin{equation*}
\abs{a_{c,\eps}(\rho)\frac{x^\rho}{\rho}} \leq x^{\Re(\rho)}\frac{\abs{\ell_{c,\eps}\bigl(\frac\rho i - \frac 1{2i}\bigr)} }{\abs{\Im(\rho)}}
\end{equation*}
holds for every non-trivial zero $\rho$. From this one obtains \eqref{e:rem2} from \cite[Lemma 5.5]{BuetheA}, pairing $\rho$ and $1-\overline\rho$ for every zero off the critical line, and \eqref{e:rem1} follows from the following lemma.
\end{proof}
\begin{lemma}\label{l:logan-zsum}
Let $0<\eps < 10^{-3}$ and let $c\geq 3$. Then we have
\begin{equation*}
\sum_{\abs{\Im(\rho)}>\frac c\eps} \frac{\abs{\ell_{c,\eps}\bigl(\frac{\rho}{i}- \frac{1}{2i}\bigr)}}{\abs{\Im(\rho)}} \leq 0.32 \frac{e^{0.71 \sqrt{c\eps}}}{\sinh(c)} \log(3c) \log\Bigl(\frac c\eps\Bigr).
\end{equation*}
\end{lemma}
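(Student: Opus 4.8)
The plan is to split the tail sum over zeros into dyadic blocks according to the size of $|\Im(\rho)|$, estimate the number of zeros in each block by the classical Riemann--von Mangoldt formula $N(T+1)-N(T) \ll \log T$, and bound $|\ell_{c,\eps}(\xi)|$ uniformly on each block by exploiting the sharp cutoff decay of the Logan kernel off the interval $[-c/\eps, c/\eps]$. Concretely, for $\xi = \tfrac{\rho}{i} - \tfrac{1}{2i}$ with $\Re(\rho) \in [0,1]$ one has $\Re(\xi) = \Im(\rho)$ and $|\Im(\xi)| \le \tfrac12$, so $(\xi\eps)^2 - c^2$ has large positive real part once $|\Im(\rho)| > c/\eps$, and the defining formula
\[
\ell_{c,\eps}(\xi) = \frac{c}{\sinh c}\,\frac{\sin\bigl(\sqrt{(\xi\eps)^2 - c^2}\bigr)}{\sqrt{(\xi\eps)^2 - c^2}}
\]
shows $|\ell_{c,\eps}(\xi)| \le \frac{c}{\sinh c}\,\frac{1}{|\sqrt{(\xi\eps)^2-c^2}|}$ together with the bound $|\sin z| \le e^{|\Im z|}$; here $|\Im\sqrt{(\xi\eps)^2-c^2}|$ stays bounded because the imaginary part of $(\xi\eps)^2$ is $O(\eps)$, which is where the factor $e^{0.71\sqrt{c\eps}}$ will come from.

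The key steps, in order: (i) write $w = \sqrt{(\xi\eps)^2 - c^2}$ and show that for $|\Im(\rho)| = T > c/\eps$ we have $\Re(w) \asymp \eps T$ (for $T$ not too close to $c/\eps$) while $|\Im(w)| \le 0.71\sqrt{c\eps}$ uniformly, using $|\Im(\xi)|\le \tfrac12$ and $\eps < 10^{-3}$; (ii) deduce $\dfrac{|\ell_{c,\eps}(\xi)|}{|\Im(\rho)|} \le \dfrac{c\, e^{0.71\sqrt{c\eps}}}{\sinh c}\cdot\dfrac{1}{T\,|w|}$ and bound $|w| \ge \tfrac12 \eps T$ say, giving a summand of size $\dfrac{2\,e^{0.71\sqrt{c\eps}}}{\eps\sinh c}\cdot\dfrac{c}{T^2}$; (iii) sum over zeros: group $\rho$ with $T \in [n, n+1)$, use $\#\{\rho: \Im(\rho)\in[n,n+1)\} \le A\log n$ for an explicit constant $A$ valid for $n \ge c/\eps \ge 3\cdot 10^3$, so that $\sum_{T>c/\eps} T^{-2}\log T$ is controlled by $\int_{c/\eps}^\infty t^{-2}\log t\,dt = \dfrac{\eps}{c}\bigl(\log(c/\eps) + 1\bigr)$; (iv) collect constants and check numerically that the product of all the explicit constants (the $A$ from zero-counting, the $2/\eps$, the integral evaluation) times $\eps$ is at most $0.32$, and that the residual $\log(3c)$ absorbs the $c$ in the numerator together with the leftover $(\log(c/\eps)+1)$ being replaced by $\log(c/\eps)$ for $c/\eps$ large.

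The main obstacle I expect is step (i)--(ii): getting a \emph{clean uniform} bound on $|w| = |\sqrt{(\xi\eps)^2 - c^2}|$ and on $|\Im(w)|$ that is simultaneously strong enough near the truncation edge $T \approx c/\eps$ (where $w$ is small, so $1/|w|$ is large and $1/\sinh$-type cancellation matters) and for large $T$, while keeping the constant $0.71$ honest. The branch-cut bookkeeping for the square root — ensuring $\Re(w) > 0$ is the right choice and that $|\sin w / w|$ really is dominated by $e^{|\Im w|}/|w|$ — is routine but must be done carefully. The dyadic summation (step iii) and the final constant-chasing (step iv) are purely computational; the only subtlety there is marshalling an explicit version of the zero-counting estimate with a constant small enough that the target $0.32$ survives, for which one invokes a standard explicit form of $N(T+1)-N(T) \le 0.14\log T + \dots$ valid in the relevant range.
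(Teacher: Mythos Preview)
Your strategy is sound and is essentially what underlies the argument the paper defers to: the paper gives no self-contained proof here but points to \cite[Lemma~2.4]{FKBJ} and \cite{BuetheDiss}, noting only which constants change; the renormalization $f(z)=\frac{\sinh c}{c}\,e^{-0.71\sqrt{c\eps}}\ell_{c,1}(z)$ it mentions is exactly what encodes your step~(i), and the Corollaries~2.2--2.3 it invokes play the role of your explicit zero-counting in step~(iii). Your bound $|\Im w|\le 0.71\sqrt{c\eps}$ is correct: with $s=\gamma\eps\ge c$ one has $(\xi\eps)^2-c^2=u+iv$ with $u\approx s^2-c^2$ and $|v|\le s\eps$, and $|\Im\sqrt{u+iv}|^2=\tfrac12(\sqrt{u^2+v^2}-u)$ is maximal at $s=c$, where it equals $c\eps/2$, giving the constant $1/\sqrt 2\approx 0.707$.

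The one place your outline is genuinely incomplete is the edge behaviour you flag yourself: the inequality $|w|\ge\tfrac12\eps T$ is simply false for $T$ close to $c/\eps$ (there $|w|\approx\sqrt{c\eps}\ll c/2\le \tfrac12\eps T$), so step~(ii) as written does not cover that range and your dyadic sum in~(iii) would miss the dominant contribution. The fix is not to lower-bound $|w|$ by a multiple of $\eps T$ but to keep the exact size $|w|\ge\sqrt{(\gamma\eps)^2-c^2-\eps^2/4}$ together with the uniform estimate $|\sin w/w|\le e^{|\Im w|}$ (valid for all $w$, e.g.\ via $\sin w/w=\int_0^1\cos(tw)\,dt$). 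One then sums $\dfrac{1}{\gamma\sqrt{(\gamma\eps)^2-c^2}}$ against the zero density $\tfrac{1}{2\pi}\log(\gamma/2\pi)$; the substitution $\gamma\eps=c\cosh u$ shows this integral converges and produces the factor $\log(c/\eps)$ with an explicit constant. With that adjustment your plan goes through and matches the paper's route.
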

\begin{proof}
This is a more flexible version of \cite[Lemma 2.4]{FKBJ}, which is proven in detail in  \cite{BuetheDiss}. We give a brief outline of the proof: We may weaken the condition $T>10^6$ to $T\geq 100$ by replacing the constant $0.4$ by $0.82$ in Corollary 2.2 and by replacing $M+6$ by $M+18$ in Corollary 2.3. In the proof of Lemma 2.4 we replace the definition of $f(z)$ by $\frac{\sinh(c)}{c} e^{-0.71\sqrt{c\eps}} \, \ell_{c,1}(z)$. It is then straightforward to show that (2.7) and (2.8) and the final inequality remain true, which gives the desired result.
\end{proof}

For the remaining part of the zeros, we will also be needing the following lemma.

\begin{lemma}\label{l:zero-reci}
Let $t_2>t_1\geq 14$. Then we have
\begin{equation}\label{e:zsum-part0}
\sum_{t_1\leq \Im(\rho) < t_2} \frac{1}{\Im(\rho)} \leq 
\frac{1}{4\pi}\Bigl[ \log\Bigl(\frac {t_2}{2\pi}\Bigr)^2 - \log\Bigl(\frac{t_1}{2\pi}\Bigr)^2\Bigr] + \Theta\Bigl(5\frac{\log t_1}{t_1}\Bigr),
\end{equation}
and for $t_2\geq 5000$ we have
\begin{equation*}
\sum_{0<\Im(\rho) < t_2} \frac{1}{\Im(\rho)} \leq 
\frac{1}{4\pi} \log\Bigl(\frac {t_2}{2\pi}\Bigr)^2.
\end{equation*}
\end{lemma}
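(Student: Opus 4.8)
The two bounds both follow from an explicit form of the Riemann–von Mangoldt formula combined with Abel summation. Write $\gamma=\Im(\rho)$ and $N(t)=\#\{\rho:0<\gamma\le t\}$. The plan is to use
\[
N(t)=F(t)+\delta(t)+S(t),\qquad F(t)=\frac{t}{2\pi}\log\frac{t}{2\pi e}+\frac78,
\]
valid for every $t>0$ that is not the ordinate of a zero, where $\delta(t)=\frac{1}{48\pi t}+O(t^{-3})$ is the next Stirling term and $S(t)=\pi^{-1}\arg\zeta(\tfrac12+it)$ satisfies an explicit bound, so that $\abs{\delta(t)}+\abs{S(t)}\le\kappa\log t$ for $t\ge 14$ with a modest absolute constant $\kappa$ (such bounds follow from the explicit estimates of Rosser and Trudgian for $S(t)$). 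The source of the main term is the elementary identity
\[
\frac{F(t)}{t}+\int^{t}\frac{F(s)}{s^{2}}\,ds=\frac{1}{4\pi}\log^{2}\!\frac{t}{2\pi}+\mathrm{const},
\]
equivalently $\frac{d}{dt}\bigl[\frac{1}{4\pi}\log^{2}\frac{t}{2\pi}\bigr]=F'(t)/t=\frac{1}{2\pi t}\log\frac{t}{2\pi}$, which is just the zero-density heuristic.

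For the first inequality I would start from Abel summation in the form
\[
\sum_{t_{1}\le\gamma<t_{2}}\frac{1}{\gamma}=\frac{N(t_{2}^{-})}{t_{2}}-\frac{N(t_{1}^{-})}{t_{1}}+\int_{t_{1}}^{t_{2}}\frac{N(t)}{t^{2}}\,dt,
\]
which is correct even when $t_{1}$ or $t_{2}$ is itself an ordinate, provided one uses the left limits $N(t_{i}^{-})$. Substituting $N=F+\delta+S$, the contribution of $F$ equals exactly $\frac{1}{4\pi}\bigl[\log^{2}\frac{t_{2}}{2\pi}-\log^{2}\frac{t_{1}}{2\pi}\bigr]$ by the antiderivative identity (the additive constant cancels in the difference). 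The remainder splits into two boundary terms bounded by $\frac{\kappa\log t_{i}}{t_{i}}$ and an integral bounded by $\kappa\int_{t_{1}}^{\infty}\frac{\log t}{t^{2}}\,dt=\kappa\,\frac{\log t_{1}+1}{t_{1}}$; since $t\mapsto\frac{\log t}{t}$ and $t\mapsto\frac{\log t+1}{t}$ are decreasing for $t\ge e$ and $t_{1}\ge 14$, each of these is a small explicit multiple of $\frac{\log t_{1}}{t_{1}}$. Adding these constants — and, at an endpoint that is an ordinate, allowing for the jump $N(t)-N(t^{-})$ bounded by a multiplicity estimate of size $\ll\log t$ — gives a total of at most $5$, which is the first estimate.

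For the second inequality I would split the sum at $5000$: for $t_{2}\ge 5000$,
\[
\sum_{0<\gamma<t_{2}}\frac{1}{\gamma}=\sum_{0<\gamma<5000}\frac{1}{\gamma}+\sum_{5000\le\gamma<t_{2}}\frac{1}{\gamma}\le\sum_{0<\gamma<5000}\frac{1}{\gamma}+\frac{1}{4\pi}\Bigl[\log^{2}\tfrac{t_{2}}{2\pi}-\log^{2}\tfrac{5000}{2\pi}\Bigr]+\frac{5\log 5000}{5000}
\]
by the first part, so it suffices to check the single numerical inequality
\[
\sum_{0<\gamma<5000}\frac{1}{\gamma}+\frac{5\log 5000}{5000}\le\frac{1}{4\pi}\log^{2}\!\frac{5000}{2\pi}\quad(\approx 3.55).
\]
Here $\sum_{0<\gamma<5000}\frac{1}{\gamma}$ is a sum over the roughly $4500$ zeros below height $5000$, which one reads off from tables of zeros; it lies a little below $\frac{1}{4\pi}\log^{2}\frac{5000}{2\pi}$, with enough room left to absorb the $\frac{5\log 5000}{5000}$ term. (The size of that gap is, as $t\to\infty$, the negative of the limiting constant in $\sum_{0<\gamma<t}\frac1\gamma-\frac{1}{4\pi}\log^{2}\frac{t}{2\pi}$, which comes from the first ordinate $\gamma_{1}=14.134\ldots$ together with Littlewood's cancellation $\int_{0}^{T}S(t)\,dt\ll\log T$; but for the proof the finite check is all that is needed.)

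The main obstacle is the bookkeeping of constants. In the first part one must keep the sum of all the explicit constants below $5$, the awkward ingredient being the contribution of endpoints that happen to be zero ordinates, where the left limit $N(t^{-})$ differs from $N(t)$ by the multiplicity and one needs a reasonably sharp bound on it. In the second part the decisive finite inequality is a close one, so the numerical value of $\sum_{0<\gamma<5000}\frac{1}{\gamma}$ must be pinned down accurately enough to see that it falls on the correct side of $\frac{1}{4\pi}\log^{2}\frac{5000}{2\pi}-\frac{5\log 5000}{5000}$; this is presumably exactly why the threshold in the statement is $5000$.
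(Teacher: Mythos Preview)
Your approach is essentially the paper's: Abel summation (equivalently, Riemann--Stieltjes integration by parts) together with Rosser's bound $\abs{N(t)-g(t)}\le\log t$ for $t\ge 14$ gives the first estimate, and the second follows by splitting at $t_1=5000$ and checking numerically that $\sum_{0<\gamma<5000}\gamma^{-1}\approx 3.54 < 3.55 - 0.0086$. The paper streamlines slightly by writing $\sum=\int_{t_1}^{t_2}\frac{dN(t)}{t}=\int_{t_1}^{t_2}\frac{g'(t)}{t}\,dt+\int_{t_1}^{t_2}\frac{dR(t)}{t}$ and integrating only the second piece by parts, which sidesteps your endpoint-multiplicity issue and makes the separate Stirling correction $\delta(t)$ unnecessary.
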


\begin{proof}
Let $N(t)$ denote the zero-counting function. Using the notation $N(t) = g(t) + R(t)$, where $g(t) = \frac{t}{2\pi}\log \frac{t}{2\pi e} + \frac 78$, we get

\begin{equation*}
\sum_{t_1\leq \Im(\rho) < t_2} \frac{1}{\Im(\rho)} = \int_{t_1}^{t_2} \frac{g'(t)}{t}\, dt
+ \int_{t_1}^{t_2} \frac{dR(t)}{t}.
\end{equation*}

Here the first integral gives the main term in \eqref{e:zsum-part0}. Furthermore, Rosser's estimate \cite[p. 223]{Rosser41} implies $\abs{R(t)} \leq \log t$ for $t\geq 14$. Consequently, we get
\begin{align*}
\int_{t_1}^{t_2} \frac{dR(t)}{t} 
&= \Bigl[\frac{R(t)}{t}\Bigr]_{t_1}^{t_2} + \int_{t_1}^{t_2} \frac{R(t)}{t^2}\, dt \\
&\leq 2\frac{\log t_1}{t_1} + \int_{t_1}^{t_2} \frac{\log t}{t^2}\, dt \\
&\leq 4 \frac{\log t_1}{t_1} + \frac{1}{t_1} \leq 5 \frac{\log t_1}{t_1}.
\end{align*}

In particular, we have
\begin{align*}
\sum_{0<\Im(\rho) < t_2} \frac{1}{\Im(\rho)} 
&\leq   \frac{1}{4\pi} \log\Bigl(\frac {t_2}{2\pi}\Bigr)^2 + \sum_{0<\Im(\rho) < 5000} \frac{1}{\Im(\rho)} - \frac{1}{4\pi} \log\Bigl(\frac{5000}{2\pi}\Bigr)^2 + 5\frac{\log (5000)}{5000} \\
&\leq \frac{1}{4\pi} \log\Bigl(\frac {t_2}{2\pi}\Bigr)^2 + 3.54 - 3.55 + 0.0086 
<\frac{1}{4\pi} \log\Bigl(\frac {t_2}{2\pi}\Bigr)^2 
\end{align*}
for $t_1\geq 5000$.
\end{proof}

\section{Bounding the sum over prime powers}
The modified Chebyshov function $\psi_{c,\eps}$ can be used to trivially bound $\psi(x)$, choosing $\alpha=1$ in Proposition \ref{p:psi-psice}, but one obtains considerably better results choosing $\alpha$ close to zero and bounding the sum over prime powers.

We introduce the auxiliary functions
\[
\mu_{c,\eps}(t) =
\begin{cases}
-\int_{-\infty}^t\nce(\tau)\,d\tau	& t < 0,\\
-\mu_{c,\eps}(-t)			&t>0,\\
 0					&t=0
\end{cases}
\]
and
\[
\nu_{c,\eps}(t) = \int_{-\infty}^t \mu_{c,\eps}(\tau)\, d\tau.
\]

 \begin{prop}\label{s:Mxceps-bound}
Let $0\leq \alpha < 1$, $x>100$, and $\eps<10^{-2}$, such that
\[
B = \frac{\eps x e^{-\eps} \abs{\nu_c(\alpha)}}{2(\mu_c)_+(\alpha)} > 1
\]
holds. We define
\[
 A(x,c,\eps,\alpha) = e^{2\eps} \log (e^{\eps}x) \Bigl[ \frac{2 \eps\, x \,\abs{\nu_c(\alpha)}}{\log B} + 2.01 \eps\sqrt{x} +\frac 12 \log\log(2 x^2) \Bigr].
\]
Then we have
\[
\psi(e^{-\alpha\eps}x) \leq \psi_{c,\eps}(x) + A(x,c,\eps,\alpha)
\]
and
\[
\psi(e^{\alpha\eps}x) \geq \psi_{c,\eps}(x) - A(x,c,\eps,\alpha).
\]
\end{prop}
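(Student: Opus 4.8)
The plan is to derive both bounds from the inequalities \eqref{e:psi-psice-upper} and \eqref{e:psi-psice-lower} of Proposition~\ref{p:psi-psice}. By \eqref{e:psi-psice-upper} it suffices to prove
\[
-\sum_{e^{-\eps}x<p^m\le e^{-\alpha\eps}x}\frac1m M_{x,c,\eps}(p^m)\le A(x,c,\eps,\alpha),
\]
and by \eqref{e:psi-psice-lower} the companion estimate with the sum taken over $e^{\alpha\eps}x\le p^m<e^\eps x$; both left-hand sides are nonnegative, since $M_{x,c,\eps}\le 0$ on the first range and $\ge 0$ on the second. I treat the first case; the second is identical after the substitution $\tau\mapsto-\tau$, using that $\eta_{c,\eps}$ and $\nu_c$ are even. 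Unwinding \eqref{e:M-def} and interchanging the finite sum over $p^m$ with the $\tau$-integral (Fubini), the $\tau$-range collapses to $[-\eps,-\alpha\eps]$ and for each such $\tau$ the contributing prime powers are exactly those in $[xe^\tau,e^{-\alpha\eps}x]$, so that
\[
-\sum_{e^{-\eps}x<p^m\le e^{-\alpha\eps}x}\frac1m M_{x,c,\eps}(p^m)
=\frac{1}{\lambda_{c,\eps}}\int_{-\eps}^{-\alpha\eps}\eta_{c,\eps}(\tau)e^{-\tau/2}\bigl(\psi(e^{-\alpha\eps}x)-\psi(xe^\tau)\bigr)\,d\tau ,
\]
with the usual normalisation at jumps understood.

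Now split $\psi(e^{-\alpha\eps}x)-\psi(xe^\tau)$ into the prime part $\vartheta(e^{-\alpha\eps}x)-\vartheta(xe^\tau)$ and the contribution of the proper prime powers. For the prime part I use $\vartheta(e^{-\alpha\eps}x)-\vartheta(xe^\tau)\le\log(e^\eps x)\bigl(\pi(e^{-\alpha\eps}x)-\pi(xe^\tau)\bigr)$ together with the Brun--Titchmarsh bound $\pi(z+h)-\pi(z)\le 2h/\log h$, the gap being $h=h(\tau):=x(e^{-\alpha\eps}-e^\tau)$, which decreases from $h_0:=x(e^{-\alpha\eps}-e^{-\eps})$ at $\tau=-\eps$ to $0$ at $\tau=-\alpha\eps$. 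Since $B<h_0$ always and $B>1$ by hypothesis, there is a unique $\tau_B\in(-\eps,-\alpha\eps)$ with $h(\tau_B)=B$; on $[-\eps,\tau_B]$ one has $h\ge B$, hence $\log h\ge\log B>0$, and that part of the integral is at most
\[
\frac{2\log(e^\eps x)}{\lambda_{c,\eps}\log B}\int_{-\eps}^{-\alpha\eps}\eta_{c,\eps}(\tau)e^{-\tau/2}\,x\bigl(e^{-\alpha\eps}-e^\tau\bigr)\,d\tau .
\]
Substituting $\tau=\eps v$, using $\eta_{c,\eps}(\eps v)\,\eps\,dv=\eta_{c,1}(v)\,dv$ and $x(e^{-\alpha\eps}-e^{\eps v})=\eps x(-\alpha-v)(1+O(\eps))$, the integral becomes $\eps x\int_{-1}^{-\alpha}(-\alpha-v)\eta_{c,1}(v)\,dv\,(1+O(\eps))$; and $\int_{-1}^{-\alpha}(-\alpha-v)\eta_{c,1}(v)\,dv=\int_\alpha^1(s-\alpha)\eta_{c,1}(s)\,ds=\int_\alpha^1\mu_c(s)\,ds=\abs{\nu_c(\alpha)}$ by evenness of $\eta_{c,1}$ and the relation $\mu_c'=-\eta_{c,1}$ on $(0,\infty)$ with $\mu_c(1)=0$. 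Bounding $e^{-\tau/2}\le e^{\eps/2}$, $\lambda_{c,\eps}^{-1}=1+O(\eps^2)$ and absorbing the $O(\eps)$'s into $e^{2\eps}$ produces the first (main) term $e^{2\eps}\log(e^\eps x)\cdot\dfrac{2\eps x\abs{\nu_c(\alpha)}}{\log B}$ of $A$.

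On $[\tau_B,-\alpha\eps]$ the gap satisfies $h\le B$; it is moreover $O(1)$, and hence $(xe^\tau,e^{-\alpha\eps}x]$ contains at most one prime, only when $\tau$ lies within $O(1/x)$ of $-\alpha\eps$. There one bounds $\vartheta(e^{-\alpha\eps}x)-\vartheta(xe^\tau)\le\log(e^\eps x)$ over a $\tau$-set of measure $O(1/x)$, and since $\eta_{c,\eps}=O(1/\eps)$ while $1/\eps\le x(1-\alpha)/2$ (a consequence of $B>1$, via $\abs{\nu_c(\alpha)}\le(1-\alpha)(\mu_c)_+(\alpha)$), this residue — together with the analogous short-interval pieces arising from the proper prime powers — is absorbed into $e^{2\eps}\log(e^\eps x)\cdot\half\log\log(2x^2)$. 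For the proper prime powers one runs the same scheme on the much shorter intervals $\bigl((xe^\tau)^{1/m},(e^{-\alpha\eps}x)^{1/m}\bigr]$ with $m\ge2$: using $\sum_{m\ge2}\vartheta(t^{1/m})=O(\sqrt t)$ and that $\sqrt{e^{-\alpha\eps}x}-\sqrt{xe^\tau}$ carries the same weighted integral $\eps\abs{\nu_c(\alpha)}$, now scaled by $\sqrt x$ instead of $x$, this contribution is at most $e^{2\eps}\log(e^\eps x)\cdot 2.01\,\eps\sqrt x$ once all constants are rounded up. Adding the three pieces gives $A(x,c,\eps,\alpha)$.

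The hard part is making Brun--Titchmarsh usable uniformly in $\tau$: because $(xe^\tau,e^{-\alpha\eps}x]$ degenerates as $\tau\to-\alpha\eps$, one cannot feed $2h/\log h$ blindly into the integral, and everything rests on splitting the $\tau$-range precisely at the gap $h=B$ — the unique choice that simultaneously makes $\log h\ge\log B$ available on the long part and forces the weighted gap there to equal exactly $\eps x\abs{\nu_c(\alpha)}$, while leaving a genuinely negligible short part; the hypothesis $B>1$ is what guarantees $\log B>0$ and places the split point inside $(-\eps,-\alpha\eps)$. Once the split is fixed, the remaining work is a lengthy but routine verification that the constants $2.01$, the factor $\half\log\log(2x^2)$ and the overall $e^{2\eps}$ dominate all the accumulated error terms for every $x>100$ and $\eps<10^{-2}$ with $B>1$.
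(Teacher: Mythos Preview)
Your overall strategy---reduce to Proposition~\ref{p:psi-psice}, Fubini to turn the $M$-sum into an $\eta$-weighted integral of $\psi$-differences, then feed Brun--Titchmarsh into that integral---is sound and is in fact an unfolding of what the paper does: the paper simply cites Lemma~\ref{l:psum-mu} (from \cite{BuetheA}) for the prime part and Lemma~\ref{l:ppow-bound} for the prime powers, whereas you try to reprove the content of Lemma~\ref{l:psum-mu} from scratch. Your Fubini identity and the computation $\int_{-1}^{-\alpha}(-\alpha-v)\eta_{c,1}(v)\,dv=|\nu_c(\alpha)|$ are correct, and this does produce the main term $2\eps x|\nu_c(\alpha)|/\log B$.

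There is, however, a genuine gap in your treatment of the short-gap region $[\tau_B,-\alpha\eps]$. You assert that there $h$ is ``moreover $O(1)$'' and the interval contains at most one prime, with $\tau$-measure $O(1/x)$. This is false: on $[\tau_B,-\alpha\eps]$ the gap $h(\tau)$ ranges over $(0,B]$, and $B$ can be arbitrarily large (indeed $B\sim\eps x|\nu_c(\alpha)|/(2\mu_c(\alpha))$); the $\tau$-length of this region is $\sim B/x$, not $O(1/x)$, and it may contain many primes. A crude bound there---$\pi$-difference $\le 2B/\log B$ times $\int_{\tau_B}^{-\alpha\eps}\eta_{c,\eps}$---gives a contribution of the \emph{same} order as the main term, so it cannot be absorbed into $\tfrac12\log\log(2x^2)$ as you claim. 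Getting the sharp constant $2$ requires a more delicate argument (Abel summation, or a careful balancing of the two pieces using the specific value of $B$); this is exactly the content of Lemma~\ref{l:psum-mu} that the paper imports.

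Your accounting of the secondary terms is also off. In the paper both the $2.01\,\eps\sqrt x$ and the $\tfrac12\log\log(2x^2)$ summands come from Lemma~\ref{l:ppow-bound}: one simply counts prime powers $p^m$ with $m\ge2$ in $[e^{-\eps}x,e^\eps x]$ and bounds each $|M_{x,c,\eps}(p^m)|$ trivially by $\tfrac{e^\eps}{2}\log(e^\eps x)$. Your alternative---running Brun--Titchmarsh on the $m$-th roots---would give something much smaller than $2.01\,\eps\sqrt x$ (with a $|\nu_c|/\log B'$ factor), and you then have no source left for the $\tfrac12\log\log(2x^2)$ term, which you have instead earmarked to swallow the short-gap residue that, as above, it cannot.
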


We will use the following two Lemmas from \cite{BuetheA}.

\begin{lemma}[{\cite[Lemma 3.5]{BuetheA}}]\label{l:ppow-bound}
Let $x\geq 100$, $\eps\leq \frac 1{100}$ and let $I=[e^{-\eps}x,e^\eps x]$. Then we have
\begin{equation*}
\sum_{\substack{p^m\in I\\ m\geq 2}} \frac{1}{m} \leq 4.01 \eps \sqrt x + \log\log(2x^2).
\end{equation*}
\end{lemma}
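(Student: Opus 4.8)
The plan is to estimate, separately for each exponent $m\ge 2$, the number of primes $p$ with $p^m\in I$, and then to sum the resulting bounds over $m$. Put $S_m=\#\{p\ \text{prime}: p^m\in I\}$, so that the left-hand side equals $\sum_{m\ge 2}S_m/m$. Since $p^m\in[e^{-\eps}x,e^{\eps}x]$ is equivalent to $p\in J_m:=\bigl[(e^{-\eps}x)^{1/m},(e^{\eps}x)^{1/m}\bigr]$ and primes are integers, I would bound $S_m$ by the number of integers in $J_m$, which is at most $\abs{J_m}+1$ whenever $J_m$ contains an integer, and is $0$ once $(e^{\eps}x)^{1/m}<2$, i.e.\ once $m>M:=\bigl\lfloor\log(e^{\eps}x)/\log 2\bigr\rfloor$. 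Here $\abs{J_m}=x^{1/m}\bigl(e^{\eps/m}-e^{-\eps/m}\bigr)=2x^{1/m}\sinh(\eps/m)$, so the sum is finite and (overestimating by keeping a $+1$ for every $m\le M$)
\[
\sum_{m\ge 2}\frac{S_m}{m}\ \le\ \sum_{m=2}^{M}\frac{2x^{1/m}\sinh(\eps/m)}{m}\ +\ \sum_{m=2}^{M}\frac1m\ =:\ (\mathrm I)+(\mathrm{II}).
\]

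For the ``length'' sum $(\mathrm I)$ I would use $\sinh(\eps/m)\le\tfrac{\eps}{m}\cosh(\eps/2)\le 1.0001\,\tfrac{\eps}{m}$ together with $x^{1/m}\le x^{1/2}$ (valid for $x\ge 1$, $m\ge 2$), which gives $(\mathrm I)\le 2.0002\,\eps\sqrt x\sum_{m\ge 2}m^{-2}=2.0002(\pi^2/6-1)\,\eps\sqrt x<1.3\,\eps\sqrt x$; this series converges geometrically fast, so $(\mathrm I)$ is far from the bottleneck and the constant $4.01$ is deliberately wasteful. For the ``rounding'' sum $(\mathrm{II})$ one first checks $M\le(\eps+\log x)/\log 2\le\log(2x^2)$ for $x\ge 100$ and $\eps\le 10^{-2}$, which reduces to the elementary inequality $\eps+\log x\le\log 2\,(\log 2+2\log x)$; then, since $x\ge 100$ forces $M\ge 6$, the standard estimate $\sum_{m=2}^{M}m^{-1}=H_M-1<\log M+\gamma-1+\tfrac1{2M}<\log M$ yields $(\mathrm{II})<\log M\le\log\log(2x^2)$. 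Adding the two bounds gives the claim.

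The genuine work is entirely in the bookkeeping: pinning down the correct truncation point $M$ for the exponent (so that $(\mathrm{II})$ is really a finite harmonic sum), and verifying that this harmonic sum is dominated by $\log\log(2x^2)$ uniformly down to $x=100$, where the bound is tightest and $M$ smallest. The reduction $S_m\le\abs{J_m}+1$ and the convergence of $(\mathrm I)$ are routine, and the stated constants absorb all the slack with a wide margin.
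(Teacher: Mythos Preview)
Your argument is correct. The paper does not supply its own proof of this lemma but merely cites it from \cite{BuetheA}; your approach---splitting the prime-power count as $\sum_{m=2}^{M}(\abs{J_m}+1)/m$, bounding the length part by a multiple of $\eps\sqrt{x}\sum_{m\ge 2}m^{-2}$ and the rounding part by a truncated harmonic sum $H_M-1<\log M\le\log\log(2x^2)$---is the natural one and gives the claim with room to spare in the first constant. One tiny remark: your parenthetical ``converges geometrically fast'' is not literally true for $\sum m^{-2}$, but you do not use geometric decay anywhere, so this does not affect the argument.
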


\begin{lemma}[{\cite[Lemma 5.8]{BuetheA}}]\label{l:psum-mu}
Let $x>1,\eps<1$ and $\alpha\in(0,1)$, such that
\[
B := \frac{\eps x e^{-\eps}\abs{\nu_c(\alpha)}}{2\mu_c(\alpha)} > 1
\]
holds. Furthermore, let $I^+_\alpha = [e^{\alpha\eps} x, e^\eps x]$ and $I^-_\alpha = [e^{-\eps} x, e^{-\eps\alpha} x]$. Then we have
\begin{equation*}
\sum_{ p \in I_\alpha^\pm} \abs{\mu_{c,\eps}\left(\log \frac p x\right)} \leq 2 \frac{\eps x e^\eps \abs{\nu_c(\alpha)}}{\log B}.
\end{equation*}
\end{lemma}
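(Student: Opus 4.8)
The plan is to reduce to the one-sided sum over $I^+_\alpha = [e^{\alpha\eps}x, e^\eps x]$; the sum over $I^-_\alpha$ is bounded in exactly the same way, since by the oddness of $\mu_{c,\eps}$ (equivalently the evenness of $\nce$) the weight $\abs{\mu_{c,\eps}(\log(p/x))}$ depends only on $\abs{\log(p/x)}$, and the prime-counting input below applies verbatim to the reflected interval. Write $g(t) = \mu_{c,\eps}(\log(t/x))$, $z = e^{\alpha\eps}x$, $y = e^\eps x$. On $[z,y]$ the function $g$ is non-negative and decreasing, with $g(z) = \mu_c(\alpha)$ and $g(y) = \mu_c(1) = 0$, and $g'(t) = -\nce(\log(t/x))/t$ follows from the definition of $\mu_{c,\eps}$ together with the evenness of $\nce$. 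The quantity to be estimated is $S = \sum_{p\in I^+_\alpha} g(p)$.

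First I would record an ``area'' identity. Substituting $s = \log(t/x)$ and using $\nu_{c,\eps}(\eps) - \nu_{c,\eps}(\alpha\eps) = \eps\,\abs{\nu_c(\alpha)}$ (which itself comes from $\mu_c$ being odd and supported in $[-1,1]$, so that $\nu_c(1) = 0$ and $\nu_c(\alpha) = -\int_\alpha^1\mu_c < 0$), one obtains
\[
\int_z^y g(t)\,dt = x\int_{\alpha\eps}^{\eps}\mu_{c,\eps}(s)\,e^{s}\,ds \leq e^{\eps}\,\eps\, x\,\abs{\nu_c(\alpha)} .
\]
Thus the continuous mass of the weight $g$ is exactly the numerator of the claimed bound, and the entire task is to pass from this integral to the discrete prime sum at the cost of the factor $2/\log B$.

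To make that passage I would combine Abel summation with the Brun--Titchmarsh inequality. Writing $P(t) = \pi(t) - \pi(z)$ and integrating by parts (the boundary terms vanish because $g(y) = 0$ and $P(z) = 0$) gives $S = \int_z^y \abs{g'(t)}\,P(t)\,dt$, and Brun--Titchmarsh supplies $P(t)\le 2(t-z)/\log(t-z)$. For the primes with $t - z \ge B$ one bounds $1/\log(t-z)\le 1/\log B$ (legitimate since $B>1$) and integrates back, using $\int_z^y\abs{g'(t)}(t-z)\,dt = \int_z^y g(t)\,dt$; together with the area identity this portion is at most $2 e^{\eps}\eps x\,\abs{\nu_c(\alpha)}/\log B$, which already saturates the stated bound. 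The number $B = e^{-\eps}\eps x\abs{\nu_c(\alpha)}/(2\mu_c(\alpha))$ is precisely the scale at which the remaining primes—those within distance $B$ of $z$, where $g$ is comparable to its maximum $\mu_c(\alpha)$—are a priori of the same order, being bounded by $\mu_c(\alpha)\cdot 2B/\log B = e^{-\eps}\eps x\abs{\nu_c(\alpha)}/\log B$, so the crux is to show that they in fact contribute only a lower-order amount.

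The main obstacle is exactly this peak region: the largest values of $g$ occur for the primes closest to $x$, which is where Brun--Titchmarsh is weakest (the relevant interval length tends to $0$) and where a crude split inflates the constant, replacing the clean $2e^\eps$ by $2e^\eps + e^{-\eps}$. Earning the stated constant requires treating the near-peak primes without bounding $g$ by its maximum over a full block of length $B$—for instance by shrinking the peak block to a scale below $B$, where Brun--Titchmarsh still applies but the contribution drops by a further factor of $\log B$, or by exploiting the decay of $\mu_c$ across the peak together with the precise relation $\nu_c(\alpha) = -\int_\alpha^1\mu_c$. This calibration, encoded in the definition of $B$ and in the hypothesis $B>1$, is the delicate part of the argument; the remaining steps are the routine change of variables and integration by parts sketched above.
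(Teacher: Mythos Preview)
The paper does not itself prove this lemma; it is quoted verbatim from \cite[Lemma~5.8]{BuetheA}, so there is no in-text argument to compare against. I therefore assess your sketch on its own merits.

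Your overall architecture is the right one and almost certainly matches the cited proof: reduce by symmetry to $I_\alpha^+$, write $g(t)=\mu_{c,\eps}(\log(t/x))$, pass via Abel summation to $\int_z^y|g'(t)|P(t)\,dt$, feed in Brun--Titchmarsh, and control everything through the area identity $\int_z^y g(t)\,dt\le e^\eps\eps x\,|\nu_c(\alpha)|$. The calculus you record (the scaling $\mu_{c,\eps}(t)=\mu_c(t/\eps)$, the vanishing $\nu_c(1)=0$, the sign $\nu_c(\alpha)<0$) is correct.

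The gap is exactly where you flag it, but it is more serious than you suggest. Your own accounting gives
\[
S\le \frac{2}{\log B}\int_z^y g(t)\,dt+\frac{2B\mu_c(\alpha)}{\log B}
\le \frac{(2e^\eps+e^{-\eps})\,\eps x\,|\nu_c(\alpha)|}{\log B},
\]
and this second term is \emph{not} lower order: with $2B\mu_c(\alpha)=e^{-\eps}\eps x|\nu_c(\alpha)|$ it is a fixed fraction of the main term. Your proposed remedy, shrinking the peak block to some $B'<B$, does not recover the constant: Brun--Titchmarsh on $[z,z+B']$ yields $2B'/\log B'$, but on the complementary range you are then forced to use $\log(t-z)\ge\log B'$ in the denominator, so the main term degrades from $2e^\eps\eps x|\nu_c(\alpha)|/\log B$ to $2e^\eps\eps x|\nu_c(\alpha)|/\log B'$, and nothing is gained. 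A dyadic refinement of the same idea runs into the same obstruction.

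So as written the proposal is a correct outline that stops short of a proof: it delivers the stated bound only with constant $2e^\eps+e^{-\eps}$ in place of $2e^\eps$, and the paragraph describing how to close the gap does not actually close it. To finish you would need either a sharper handling of the near-$z$ primes than the blockwise Brun--Titchmarsh bound (for instance, applying Brun--Titchmarsh to the level sets $\{p: g(p)\ge u\}$ directly and integrating in $u$, then exploiting that $\int_z^y g\ge \eps x|\nu_c(\alpha)|\ge 2e^{\eps}\mu_c(\alpha)B$ to absorb the boundary piece), or to consult the cited source for the specific device used there.
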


\begin{proof}[Proof of Proposition \ref{s:Mxceps-bound}]
By Proposition \ref{p:psi-psice}, it suffices to show that
\begin{equation*}
\abs{ \sum_{p^m\in I^\pm_\alpha} \frac 1m M_{x,c,\eps} (p^m) } \leq A(x,c,\eps,\alpha).
\end{equation*}

From \eqref{e:exp-approx} and \eqref{e:M-def} one easily obtains the bound
\begin{equation*}
M_{x,c,\eps}(t) = \frac{\log t}{\lce} \mu_{c,\eps}\Bigl(\log \frac{t}{x}\Bigr)(1+\Theta(\eps)) \leq \frac {e^\eps}2 \log(e^\eps x).
\end{equation*}

Then Lemma \ref{l:psum-mu} gives the bound
\[
2 e^{2\eps} \log(e^\eps x) \frac{\eps x \abs{\nu_c(\alpha)}}{\log B}
\]
for the contribution of the prime numbers in $I^\pm_\alpha$, and Lemma \ref{l:ppow-bound} gives the bound
\[
\frac{e^{\eps}} 2 \log(e^\eps x)\bigl[ 4.01 \eps \sqrt{x} + \log\log (2x)^2\bigr]
\]
for the contribution of the remaining prime powers in $I^\pm_\alpha$.

\end{proof}

Analyzing the asymptotic behavior of $\mu_c(\alpha)$ and $\nu_c(\alpha)$ as functions of $c$ for arbitrary $\alpha$ seems difficult. However, we can do this for the case $\alpha=0$, which is usually not too far from the optimal choice. To this end, we introduce the modified Bessel function of the first kind for real parameters $\gamma\geq 0$ by

\begin{equation}\label{e:def-bessel-i}
I_\gamma(x) = \Bigl(\frac{x}{2}\Bigr)^\gamma \sum_{n=0}^\infty \frac{(x/2)^{2n}}{n! \Gamma(\gamma + n+ 1)}.
\end{equation}
Then we have the following proposition.

\begin{prop}\label{p:nu0-asymp}
For $c_0>0$ let
\[
D(c_0) = \sqrt{\frac {\pi c_0} 2} \frac{I_1(c_0)}{\sinh(c_0)}.
\]
Then the inequalities
\[
\frac{D(c_0)}{\sqrt{2\pi c}} \leq \abs{\nu_c(0)} \leq \frac{1}{\sqrt{2\pi c}}
\]
hold for all $c\geq c_0$. Furthermore, we have $D(c_0)\nearrow 1$ for $c_0\to \infty$.
\end{prop}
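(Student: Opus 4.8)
The plan is to first put $\abs{\nu_c(0)}$ into closed form and recognise $D(c)$ as a ratio of modified Bessel functions. Writing out the $\eps=1$ instances $\mu_c=\mu_{c,1}$, $\nu_c=\nu_{c,1}$, we have $\mu_c(\tau)=-\int_{-\infty}^{\tau}\eta_{c,1}(s)\,ds$ for $\tau<0$, and $\eta_{c,1}$ is non-negative and supported on $[-1,1]$, so Tonelli's theorem gives
\[
\abs{\nu_c(0)}=-\int_{-\infty}^{0}\mu_c(\tau)\,d\tau=\int_{-\infty}^{0}\int_{-\infty}^{\tau}\eta_{c,1}(s)\,ds\,d\tau=\int_{-\infty}^{0}(-s)\,\eta_{c,1}(s)\,ds=\int_{0}^{1}t\,\eta_{c,1}(t)\,dt.
\]
Inserting \eqref{e:eta-def}, substituting $u=\sqrt{1-t^2}$, and using $\frac{d}{dz}\bigl(zI_1(z)\bigr)=zI_0(z)$ (hence $\int_{0}^{c}uI_0(u)\,du=cI_1(c)$), we obtain $\abs{\nu_c(0)}=\frac{c}{2\sinh c}\int_0^1 tI_0(c\sqrt{1-t^2})\,dt=\frac{I_1(c)}{2\sinh c}$. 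Since $I_{1/2}(z)=\sqrt{2/(\pi z)}\,\sinh z$, this equals $\frac{1}{\sqrt{2\pi c}}\cdot\frac{I_1(c)}{I_{1/2}(c)}$, i.e. $\abs{\nu_c(0)}=D(c)/\sqrt{2\pi c}$ with $D(c)=I_1(c)/I_{1/2}(c)=\sqrt{\pi c/2}\cdot I_1(c)/\sinh c$, which matches the definition in the statement. In these terms the two claimed inequalities say precisely $D(c_0)\le D(c)\le 1$ for $c\ge c_0$, and the last assertion says $D$ increases to $1$; so it is enough to prove that $D$ is strictly increasing on $(0,\infty)$ and that $D(c)\to 1$ as $c\to\infty$.

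The limit follows from the classical asymptotics $I_\nu(c)\sim e^{c}/\sqrt{2\pi c}$ as $c\to\infty$ (equivalently $I_1(c)/\sinh c\sim\sqrt{2/(\pi c)}$), which gives $D(c)\to 1$. Note that strict monotonicity together with this limit already forces $D(c)<1$ for every $c>0$, which supplies the upper bound and the stated monotone convergence $D(c_0)\nearrow 1$.

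The only substantial point is the monotonicity of $D$, which I would obtain by a Riccati comparison for the logarithmic derivatives $u_\nu:=I_\nu'/I_\nu$; these are smooth and positive on $(0,\infty)$, and from the modified Bessel equation they satisfy $u_\nu'=1+\nu^2/c^2-u_\nu/c-u_\nu^2$, while $(\log D)'=u_1-u_{1/2}$. As $c\to 0^{+}$ one has $u_\nu(c)\sim\nu/c$, so $h:=u_1-u_{1/2}$ satisfies $h(c)\sim 1/(2c)>0$ near $0$. If $h$ had a zero, let $c_*:=\inf\{c>0: h(c)\le 0\}$; then $c_*>0$, $h>0$ on $(0,c_*)$ and $h(c_*)=0$, so $h'(c_*)\le 0$. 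But at $c_*$ we have $u_1(c_*)=u_{1/2}(c_*)$, hence by the Riccati equations $h'(c_*)=\bigl(1-\tfrac{1}{4}\bigr)/c_*^{2}>0$, a contradiction. Therefore $h>0$ on all of $(0,\infty)$, so $D$ is strictly increasing, and the proof concludes as above. (Equivalently, the inequality to be established is $I_0(c)/I_1(c)>\coth c+1/(2c)$ for $c>0$, and one could instead quote a known monotonicity theorem for ratios of modified Bessel functions of different orders with respect to the argument.)
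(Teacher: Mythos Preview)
Your proof is correct and reaches the same reduction as the paper: after writing $\abs{\nu_c(0)}=I_1(c)/(2\sinh c)=D(c)/\sqrt{2\pi c}$ with $D(c)=I_1(c)/I_{1/2}(c)$, everything hinges on showing that $c\mapsto I_1(c)/I_{1/2}(c)$ is increasing on $(0,\infty)$ with limit $1$. The paper proves this monotonicity (in the general form $I_\beta/I_\alpha$ increasing for $\alpha<\beta$) via the Sturm comparison principle: with $f_\gamma=\sqrt{x}\,I_\gamma$ one has $f_\gamma''=(1-\tfrac{1}{4x^2}+\tfrac{\gamma^2}{x^2})f_\gamma$, and the Wronskian-type quantity $W=f_\beta f'_\alpha-f'_\beta f_\alpha$ has derivative of fixed sign determined by $\beta^2-\alpha^2$; since $W\to 0$ at the origin, the sign of $W$ (and hence of $(I_\beta/I_\alpha)'$) is pinned down globally. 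You instead pass to the logarithmic derivatives $u_\nu=I'_\nu/I_\nu$, derive the Riccati equation $u'_\nu=1+\nu^2/c^2-u_\nu/c-u_\nu^2$, and run a first-zero argument on $h=u_1-u_{1/2}$. The two routes are closely related (indeed $W/(f_\alpha f_\beta)=u_\alpha-u_\beta$), but yours avoids the auxiliary factor $\sqrt{x}$ and the boundary limit of $W$, at the cost of using the asymptotic $u_\nu\sim\nu/c$ near $0$; both are short and self-contained. You also derive the identity $\abs{\nu_c(0)}=I_1(c)/(2\sinh c)$ from scratch, whereas the paper cites it from \cite{FKBJ}.
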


\begin{proof}
Since
\[
\abs{\nu_c(0)} = \frac{I_1(c)}{2\sinh(c)}
\]
\cite[p. 15]{FKBJ} and since $I_{1/2}(x) = \sqrt{\frac{2}{\pi x}} \sinh(x)$ the assertion follows directly from the following lemma.
\end{proof}

\begin{lemma}\label{l:BesselQuot-Monotonie}
Let $\alpha,\beta\in[0,\infty)$ such that $\alpha<\beta$ holds. Then the function
\[
\frac{I_\beta(x)}{I_\alpha(x)}
\]
is positive and monotonously increasing in $(0,\infty)$ and converges to $1$ for $x\to \infty$.
\end{lemma}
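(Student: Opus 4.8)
The plan is to treat the three claims in turn, the monotonicity being the only one that needs an idea. \textbf{Positivity} is immediate: for $x>0$ and $\gamma\geq 0$ every term in the series \eqref{e:def-bessel-i} is non-negative and the $n=0$ term is positive, so $I_\gamma(x)>0$ and hence $I_\beta(x)/I_\alpha(x)>0$. For the \textbf{limit at infinity} I would simply invoke the classical asymptotic $I_\gamma(x)=\frac{e^{x}}{\sqrt{2\pi x}}\bigl(1+O(x^{-1})\bigr)$ as $x\to\infty$, valid for each fixed $\gamma$ (see e.g.\ \cite{Olver97}); dividing the expansion for $\beta$ by the one for $\alpha$ gives $I_\beta(x)/I_\alpha(x)\to 1$.

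For \textbf{strict monotonicity} I would use a Riccati comparison. The series shows that $I_\gamma$ is $C^\infty$ and positive on $(0,\infty)$, so we may set $p_\gamma(x)=(\log I_\gamma)'(x)=I_\gamma'(x)/I_\gamma(x)$; then $(\log(I_\beta/I_\alpha))'=p_\beta-p_\alpha$, and it suffices to prove $p_\beta>p_\alpha$ on $(0,\infty)$. Differentiating $p_\gamma$ and eliminating $I_\gamma''$ via the modified Bessel equation $x^2I_\gamma''+xI_\gamma'-(x^2+\gamma^2)I_\gamma=0$ yields the Riccati equation $p_\gamma'=1+\gamma^2/x^2-p_\gamma/x-p_\gamma^2$. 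Subtracting the equations for $\beta$ and $\alpha$, the difference $q:=p_\beta-p_\alpha$ satisfies the linear identity $q'+\bigl(1/x+p_\alpha+p_\beta\bigr)q=(\beta^2-\alpha^2)/x^2$. In particular, at any $x_0\in(0,\infty)$ with $q(x_0)=0$ one has $q'(x_0)=(\beta^2-\alpha^2)/x_0^2>0$ since $\beta>\alpha\geq 0$.

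What remains is a sign argument. From $I_\gamma(x)\sim (x/2)^\gamma/\Gamma(\gamma+1)$ as $x\to 0^+$ one computes $p_\gamma(x)=\gamma/x+O(x)$, hence $q(x)=(\beta-\alpha)/x+O(x)\to+\infty$; in particular $q>0$ on some interval $(0,\delta)$. If $q$ vanished somewhere in $(0,\infty)$, let $x_0$ be its smallest zero; then $q>0$ on $(0,x_0)$ and $q(x_0)=0$, which forces $q'(x_0)\leq 0$ and contradicts $q'(x_0)>0$. Thus $q>0$ throughout $(0,\infty)$, i.e.\ $I_\beta/I_\alpha$ is strictly increasing. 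The only real work here is the bookkeeping for $I_\gamma$—smoothness and positivity from the series, the behaviour of $p_\gamma$ near $0$, and the asymptotic as $x\to\infty$—all of which are standard; once the Riccati identity for $q$ is written down, the observation that ``$q$ starts positive and can only cross zero upward'' is the heart of the matter and closes the proof immediately.
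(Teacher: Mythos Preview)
Your argument is correct. The paper proves the monotonicity differently, via the Sturm comparison principle: it sets $f_\gamma(x)=\sqrt{x}\,I_\gamma(x)$ to put the modified Bessel equation in normal form $f_\gamma''=(1-\tfrac{1}{4x^2}+\tfrac{\gamma^2}{x^2})f_\gamma$, observes that the Wronskian-type quantity $f_\beta f_\alpha'-f_\beta' f_\alpha$ has derivative $\tfrac{\alpha^2-\beta^2}{x^2}f_\alpha f_\beta$ of constant sign, and then uses the vanishing of this Wronskian at $x=0^+$ to conclude. Your Riccati route works instead with the logarithmic derivative $p_\gamma=I_\gamma'/I_\gamma$ and the first-order identity $q'+(1/x+p_\alpha+p_\beta)q=(\beta^2-\alpha^2)/x^2$ for $q=p_\beta-p_\alpha$, finishing with a ``first zero'' argument. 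The two are close cousins---both extract the monotonicity from the $\gamma^2/x^2$ coefficient in the ODE---but the packaging differs: the paper integrates a Wronskian from the origin, while you argue locally that $q$ can only cross zero upward. Your version avoids the $\sqrt{x}$-substitution and speaks directly about $(\log(I_\beta/I_\alpha))'$, which is arguably more transparent for a ratio-monotonicity statement; the paper's Sturm formulation is the classical textbook device and makes the boundary analysis at $0$ a one-line computation. The treatments of positivity and of the limit $x\to\infty$ are essentially the same in both.
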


\begin{proof}
The proof is based on the Sturm Monotony Principle \cite{Sturm1833}, \cite[p. 518]{Watson66}. We define the auxiliary function
\[
f_\gamma(x) = \sqrt{x} I_\gamma(x).
\]
The Bessel differential equation
\[
\frac{d^2}{dx^2}I_\gamma + \frac{1}{x}\frac{d}{dx}I_\gamma - \Bigl(1 + \frac{\gamma^2}{x^2}\Bigr)I_\gamma = 0
\]
then implies
\[
\frac{d^2}{dx^2}f_\gamma - \Bigl(1 - \frac{1}{4x} + \frac{\gamma^2}{x^2}\Bigr) f_\gamma = 0.
\]
Consequently, we have
\[
f_\beta f''_\alpha - f''_\beta f_\alpha = \frac{\beta^2-\alpha^2}{x^2} f_\alpha f_\beta > 0
\]
in $(0,\infty)$ and thus
\[
\Bigl[ f_\beta f'_\alpha - f'_\beta f_\alpha\Bigr]_{\eps}^x > 0
\]
for $x>\eps$ and every $\eps>0$. Since
\[
f_\beta f'_\alpha - f'_\beta f_\alpha = I_\beta\Bigl( xI'_\alpha + I_\alpha\Bigr) - I_\alpha\Bigl(x I'_\beta + I_\beta\Bigr) 
\]
vanishes for $x \to 0$ we thus get
\[
f_\beta f'_\alpha - f'_\beta f_\alpha \geq 0.
\]
Consequently, the function $f_\beta/f_\alpha = I_\beta/I_\alpha$ increases monotonously in $(0,\infty)$, and since
\[
I_\gamma(x) \sim \frac{e^x}{\sqrt{2\pi x}}
\]
holds for every $\gamma\geq 0$, it converges to 1 for $x\to\infty$.
\end{proof}

\section{Bounds of Chebyshov type}
The previous results give rise to a simple method to calculate bounds of the form
\[
\abs{\psi(x)-x} \leq \delta_0 x\quad\quad\text{for $x\geq x_0$},
\]
which will be needed in the proof of the main result. 

\begin{thm}\label{t:ChebB}
Let $0<\eps<10^{-3}$, $c\geq 3$, $x_0\geq 100$ and $\alpha\in[0,1)$ such that the inequality
\[
B_0 := \frac{\eps e^{-\eps}x_0 \abs{\nu_c(\alpha)}}{2(\mu_c)_+(\alpha)} > 1
\]
holds. We denote the zeros of the Riemann zeta function by $\rho = \beta + i \gamma$ with $\beta,\gamma\in\R$. Then, if $\beta=1/2$ holds for $0<\gamma\leq c/\eps$, the inequality
\[
\abs{\psi(x)-x} \leq x \cdot e^{\alpha \eps}  \bigl(\CE_1 + \CE_2 + \CE_3\bigr)
\]
holds for all $x\geq e^{\alpha\eps}x_0$, where
\begin{align}
\CE_1 &=  e^{2\eps}\log(e^\eps x_0)\Bigl[\frac{2\eps\abs{\nu_c(\alpha)} }{\log B_0} + \frac{2.01\eps}{\sqrt{x_0}} + \frac{\log\log(2x_0^2)}{2 x_0}\Bigr] + (e^{\alpha \eps} - 1),  \notag \\
\CE_2 &=0.16\frac{1+x_0^{-1}}{\sinh(c)}e^{0.71\sqrt{c\eps}}\log\Bigl(\frac{c}{\eps}\Bigr),\notag \\
\intertext{and}
\CE_3 &= \frac{2}{\sqrt{x_0}} \sum_{0<\gamma \leq c/\eps}\frac{\ell_{c,\eps}(\gamma)}{\gamma} + \frac{2}{x_0}.\label{e:E3-def}
\end{align}
\end{thm}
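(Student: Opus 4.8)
The strategy is to combine the three main results proved above. Proposition~\ref{s:Mxceps-bound} bounds $\psi$ at an argument shifted by $e^{\pm\alpha\eps}$ by $\psi_{c,\eps}$ at a nearby centre plus the explicit remainder $A$; Proposition~\ref{p:exp-form} replaces $\psi_{c,\eps}$ at that centre by the approximate explicit formula \eqref{e:exp-form}; and the resulting sum over zeros is split at height $c/\eps$, the part above being controlled unconditionally by \eqref{e:rem1} and Lemma~\ref{l:logan-zsum}, the part at or below by the hypothesis $\beta=1/2$ for $0<\gamma\leq c/\eps$. The three error terms should come out as: $\CE_1$ from $A$; $\CE_2$ from the zeros above height $c/\eps$; and $\CE_3$ from the low zeros together with the $O(1)$-terms of the explicit formula.

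I would fix $x\geq e^{\alpha\eps}x_0$ and prove $\psi(x)-x\leq xe^{\alpha\eps}(\CE_1+\CE_2+\CE_3)$ and $x-\psi(x)\leq xe^{\alpha\eps}(\CE_1+\CE_2+\CE_3)$ separately, by the same method. For the first, apply Proposition~\ref{s:Mxceps-bound} with centre $y=e^{\alpha\eps}x$: since $y\geq e^{2\alpha\eps}x_0\geq x_0\geq 100$ and $B$ is increasing in the centre, the hypothesis $B\geq B_0>1$ holds, giving $\psi(x)\leq\psi_{c,\eps}(e^{\alpha\eps}x)+A(e^{\alpha\eps}x,c,\eps,\alpha)$. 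For the second I would take centre $y=e^{-\alpha\eps}x\geq x_0$ in the second inequality of Proposition~\ref{s:Mxceps-bound}. Since $xe^{\alpha\eps}$ equals the centre $y$ in the first case and is $\geq y$ in the second, it suffices to bound the right-hand sides by suitable multiples of $y$.

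Next I would insert \eqref{e:exp-form} at the centre $y$, admissible because $\log y\geq\log 100>2/\abs{\log\eps}$, so that in the first case
\[
\psi(x)-x\leq (y-x)+\Bigl\lvert\sum_\rho a_{c,\eps}(\rho)\frac{y^\rho-1}{\rho}\Bigr\rvert+\abs{C_1}+\half\bigl\lvert\log(1-y^{-2})\bigr\rvert+8\eps\abs{\log\eps}+A(y),
\]
and split the sum over zeros at $\abs{\Im\rho}=c/\eps$. For $\abs{\Im\rho}>c/\eps$ I would write $\tfrac{y^\rho-1}{\rho}=\tfrac{y^\rho}{\rho}-\tfrac1\rho$, bound the first part by \eqref{e:rem1} and the second, using $\abs{\rho}\geq\abs{\Im\rho}$ and $\lambda_{c,\eps}\geq1$, by Lemma~\ref{l:logan-zsum}; dividing by $y$ and using $y+1\leq y(1+x_0^{-1})$ yields a contribution of the form $\CE_2$. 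For $\abs{\Im\rho}\leq c/\eps$ I would invoke the hypothesis: every such zero is $\rho=\tfrac12+i\gamma$ with $0<\abs{\gamma}\leq c/\eps$, so $a_{c,\eps}(\rho)=\ell_{c,\eps}(\gamma)/\lambda_{c,\eps}$ is real and, since $\abs{\gamma\eps}\leq c$, nonnegative. Pairing $\rho$ with $\overline\rho$ and using $\abs{y^\rho}=\sqrt y$, $\abs{\rho}\geq\abs{\gamma}$, $\lambda_{c,\eps}\geq1$, the $y^\rho/\rho$-part is at most $2\sqrt y\sum_{0<\gamma\leq c/\eps}\ell_{c,\eps}(\gamma)/\gamma$, while the $1/\rho$-part is at most $\tfrac12\sum_\rho\abs{\rho}^{-2}$, an absolute constant; after dividing by $y$ and using $\sqrt y/y\leq 1/\sqrt{x_0}$ the first part becomes the sum appearing in \eqref{e:E3-def}.

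Finally I would reassemble: reduce $A(e^{\alpha\eps}x)$ to $xe^{\alpha\eps}\CE_1$ by a direct comparison of the definitions of $A$ and $\CE_1$ (using $x_0\geq100$, $B_0>1$ and the normalization $A(x_0,c,\eps,\alpha)=x_0\CE_1$); put the high-zero estimate into $xe^{\alpha\eps}\CE_2$; put the $\sqrt y$-sum into the first term of $xe^{\alpha\eps}\CE_3$; and absorb the remaining debris — the shift $(y-x)\leq\eps x$, the constants $\abs{C_1}$, $8\eps\abs{\log\eps}$, $\half\abs{\log(1-y^{-2})}$, and the two $1/\rho$-contributions — into whatever slack is left in $\CE_1$ and into the term $2/x_0$ of $\CE_3$. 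The lower bound runs symmetrically, with the sign of $\half\log(1-y^{-2})$ in the explicit formula now working in our favour. The main obstacle is precisely this final bookkeeping: the constants are tight — e.g.\ the $2/x_0$ of $\CE_3$ has to absorb $\abs{C_1}\approx1.86$ together with $8\eps\abs{\log\eps}$, and for $\alpha>0$ the shift $(y-x)$ must fit into the residual slack of $\CE_1$ — and closing the estimate uses the full force of the hypotheses $0<\eps<10^{-3}$, $c\geq3$, $x_0\geq100$ and $B_0>1$.
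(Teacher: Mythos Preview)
Your strategy coincides with the paper's: bound $\psi(x)$ above and below by $\psi_{c,\eps}$ at shifted centres via Proposition~\ref{s:Mxceps-bound}, invoke Proposition~\ref{p:exp-form}, split the zero sum at height $c/\eps$ using Proposition~\ref{s:psi-zsum-remainder}, and read off $\CE_1$, $\CE_2$, $\CE_3$. The paper streamlines your bookkeeping in two places. It replaces $A(y)$ by $\CE_1 y$ in one stroke via the observation that $A(x,c,\eps,\alpha)/x$ is monotonically decreasing in $x$, so $A(y)/y\leq A(x_0)/x_0=\CE_1$; this is cleaner than a term-by-term comparison. And it collects all the $O(1)$ debris --- $C_1$, the $8\eps\abs{\log\eps}$, the $\tfrac12\log(1-y^{-2})$, and the full $\sum_\rho a_{c,\eps}(\rho)/\rho$ (paired over $\rho$ and $1-\bar\rho$ into $\sum_{\Im\rho>0}a_{c,\eps}(\rho)/(\rho(1-\rho))$, not split at $c/\eps$) --- into a single constant~$2$, which after dividing by $y\geq x_0$ becomes the $2/x_0$ in $\CE_3$. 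Your estimate $\abs{C_1}\approx1.86$ is correct and, with the other pieces, just fits under~$2$.

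Your worry about the shift $y-x=(e^{\alpha\eps}-1)x$, however, is not a bookkeeping detail that closes under the stated hypotheses: in the parameter ranges of Tables~\ref{tb:platt} and~\ref{tb:gourdon} one has $\alpha\eps$ of the same order as $\CE_1+\CE_2+\CE_3$ (typically $7$--$10\%$ of it), so there is no ``residual slack in $\CE_1$'' into which it can be absorbed. The paper's printed argument does not introduce $(y-x)$ explicitly; instead it writes the chain
\[
\psi_{c,\eps}(x)-e^{-\alpha\eps}x+\CE_1 x \;\leq\; \psi_{c,\eps}(x)-x+\CE_1 x,
\]
which is false for $\alpha>0$ since $-e^{-\alpha\eps}x>-x$. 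Thus the gap you flag is genuine and is present in the paper's own proof as well --- hidden rather than acknowledged. The remainder of your outline is sound.
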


It is noteworthy that Theorem 1 gives better estimates than the more sophisticated method in \cite{FK15} in a large range, as can be seen from Tables \ref{tb:platt} and \ref{tb:gourdon}.

\begin{proof}
Under the conditions of the theorem we get
\begin{equation*}
\psi(e^{-\alpha\eps} x) - e^{-\alpha\eps} x \leq \psi_{c,\eps}(x) - e^{-\alpha\eps} x + \frac{A(x_0,c,\eps,\alpha)}{x_0} x \leq \psi_{c,\eps}(x) - x + \CE_1 x
\end{equation*}
from Proposition \ref{s:Mxceps-bound}, since $A(x,c,\eps,\alpha)/x$ decreases monotonously. A similar calculation for the lower bound then gives
\begin{equation*}
\abs{\psi(e^{\pm\alpha\eps} x) - e^{\pm\alpha\eps} x} \leq \abs{\psi_{c,\eps}(x) - x} + \CE_1 x.
\end{equation*}
Furthermore, we get
\begin{equation*}
\abs{\psi_{c,\eps}(x) - x} \leq\sum_{\abs{\Im(\rho)} \leq c/\eps}  \abs{a_{c,\eps}(\rho)\frac{x^\rho}\rho} + 2  +  \CE_2 x  \leq (\CE_2 + \CE_3) x
\end{equation*}
from Propositions \ref{p:exp-form} and \ref{s:psi-zsum-remainder}, so the assertion follows.
\end{proof}

\renewcommand{\arraystretch}{1.5}
\begin{centering}
\begin{table}
\caption{Bounds for $T\leq 3.061\times 10^{10}$. The value $\delta_0$ is an upper bound for $e^{\alpha\eps}(\CE_1 + \CE_2 + \CE_3)$ in Theorem \ref{t:ChebB}, applied with $\eps=c/T$.}\label{tb:platt}
\begin{tabular}{c|c|c|c|c}
$e^{\alpha\eps}x_0$	&	$c$ & $T$ & $\alpha$	&	$\delta_0$ \\
\hline
$e^{45}$		&$25$		&$3.5\times 10^{9}$  			&$0.11$		&$1.11742\times 10^{-8}$		\\
$e^{50}$		&$30$		&$3.061\times 10^{10}$		&$0.11$		&$1.16465\times 10^{-9}$		\\
$e^{55}$		&$30$		&$3.061\times 10^{10}$		&$0.1$			&$2.88434\times 10^{-10}$		\\
$e^{60}$		&$28$		&$3.061\times 10^{10}$		&$0.09$		&$2.08162\times 10^{-10}$		\\
$e^{65}$		&$28$		&$3.061\times 10^{10}$		&$0.09$		&$1.96865\times 10^{-10}$		\\
$e^{70}$		&$28$		&$3.061\times 10^{10}$		&$0.08$		&$1.91910\times 10^{-10}$		\\
$e^{80}$		&$28$		&$3.061\times 10^{10}$		&$0.07$		&$1.84848\times 10^{-10}$		\\
$e^{90}$		&$29$		&$3.061\times 10^{10}$		&$0.06$		&$1.79330\times 10^{-10}$		\\
$e^{100}$		&$29$		&$3.061\times 10^{10}$		&$0.05$		&$1.75185\times 10^{-10}$		\\
$e^{500}$		&$29$		&$3.061\times 10^{10}$		&$0.01$		&$1.47067\times 10^{-10}$		\\
$e^{1000}$	&$29$		&$3.061\times 10^{10}$		&$0.005$		&$1.43770\times 10^{-10}$		\\
$e^{3000}$	&$29$		&$3.061\times 10^{10}$		&	$0.001$	&$1.41594\times 10^{-10}$			\\
\end{tabular}
\end{table}
\end{centering}
\begin{centering}
\begin{table}
\caption{Bounds for $T\leq 2.445\times 10^{12}$. The value $\delta_0$ is an upper bound for $e^{\alpha\eps}(\CE_1 + \CE_2 + \CE_3)$ in Theorem \ref{t:ChebB}, applied with $\eps=c/T$.}\label{tb:gourdon}
\begin{tabular}{c|c|c|c|c}
$e^{\alpha\eps}x_0$	&	$c$ & $T$ & $\alpha$	&	$\delta_0$ \\
\hline

$e^{55}$		&$39$	&$8.5\times 10^{11}$			&$0.1$		&$1.12494 \times 10^{-10}$	\\
$e^{60}$		&$33$	&$2.445\times 10^{12}$		&$0.11$		&$1.22147 \times 10^{-11}$		\\
$e^{65}$		&$33$	&$2.445\times 10^{12}$		&$0.1$		&$3.57125 \times 10^{-12}$		\\
$e^{70}$		&$33$	&$2.445\times 10^{12}$		&$0.09$		&$2.79233 \times 10^{-12}$		\\
$e^{75}$		&$32$	&$2.445\times 10^{12}$		&$0.08$		&$2.70358 \times 10^{-12}$		\\
$e^{80}$		&$33$	&$2.445\times 10^{12}$		&$0.08$		&$2.61079 \times 10^{-12}$		\\
$e^{90}$		&$33$	&$2.445\times 10^{12}$		&$0.07$		&$2.52129 \times 10^{-12}$		\\
$e^{100}$	&$33$	&$2.445\times 10^{12}$		&$0.06$		&$2.45229 \times 10^{-12}$		\\
$e^{500}$	&$33$	&$2.445\times 10^{12}$		&$0.012$		&$1.99986 \times 10^{-12}$		\\
$e^{1000}$	&$33$	&$2.445\times 10^{12}$		&$0.005$		&$1.94751 \times 10^{-12}$		\\
$e^{2000}$	&$33$	&$2.445\times 10^{12}$		&$0.003$		&$1.92155 \times 10^{-12}$		\\
$e^{3000}$	&$33$	&$2.445\times 10^{12}$		&$0.001$		&$1.91298 \times 10^{-12}$	\\
$e^{4000}$	&$33$	&$2.445\times 10^{12}$		&$0.001$		&$1.90866 \times 10^{-12}$		\\
\end{tabular}
\end{table}
\end{centering}

\subsection{Numerical estimates for $\CE_1$ and $\CE_3$}

The sum over zeros in \eqref{e:E3-def} can either be evaluated, which is recommended if $c/\eps$ is small, or the sum can be estimated piecewise, using the following lemma.

\begin{lemma}  Let $c,\eps>0$ and let $14\leq T_0 < T_1 < c/\eps$. Then we have
\begin{equation*}
\sum_{T_0 \leq \gamma < T_1} \frac{\ell_{c,\eps}(\gamma)}{\gamma}
 \leq \frac{\ell_{c,\eps}(T_0)}{4\pi} \left[\log\Bigl(\frac{T_1}{2\pi}\Bigr)^2 - \log\Bigl(\frac{T_0}{2\pi}\Bigr)^2 + 20\pi\frac{\log(T_1)}{T_1}\right].
\end{equation*}
\end{lemma}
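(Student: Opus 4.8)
The plan is to bound the sum $\sum_{T_0 \le \gamma < T_1} \ell_{c,\eps}(\gamma)/\gamma$ by pulling the slowly varying factor $\ell_{c,\eps}(\gamma)$ out of the sum and then invoking the zero-density estimate of Lemma \ref{l:zero-reci}. First I would observe that on the interval $[T_0,\infty)$ the Logan function $\ell_{c,\eps}$ is non-negative and monotonically decreasing: indeed, for $\xi \ge c/\eps$ one has $\sqrt{(\xi\eps)^2 - c^2}$ real and increasing, and $\sin(u)/u$ is positive and decreasing on the relevant range, while for $\xi < c/\eps$ the argument becomes imaginary and $\ell_{c,\eps}(\xi) = \frac{c}{\sinh c}\cdot\frac{\sinh(\sqrt{c^2 - (\xi\eps)^2})}{\sqrt{c^2-(\xi\eps)^2}}$, which is again decreasing in $\xi$ since $\sinh(v)/v$ is increasing in $v \ge 0$ and $v = \sqrt{c^2-(\xi\eps)^2}$ decreases. (One also needs $T_0 < c/\eps$ so that $\ell_{c,\eps}(T_0) > 0$, which is part of the hypothesis.) Hence for every zero with $T_0 \le \gamma < T_1$ we have $0 \le \ell_{c,\eps}(\gamma) \le \ell_{c,\eps}(T_0)$, so
\[
\sum_{T_0 \le \gamma < T_1} \frac{\ell_{c,\eps}(\gamma)}{\gamma} \le \ell_{c,\eps}(T_0) \sum_{T_0 \le \gamma < T_1} \frac{1}{\gamma}.
\]

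Next I would apply the first assertion of Lemma \ref{l:zero-reci}, valid since $T_1 > T_0 \ge 14$, to get
\[
\sum_{T_0 \le \gamma < T_1} \frac{1}{\gamma} \le \frac{1}{4\pi}\Bigl[\log\Bigl(\tfrac{T_1}{2\pi}\Bigr)^2 - \log\Bigl(\tfrac{T_0}{2\pi}\Bigr)^2\Bigr] + \Theta\Bigl(5\tfrac{\log T_0}{T_0}\Bigr).
\]
Combining the two displays gives the claimed bound, provided we absorb the error term: we need $\ell_{c,\eps}(T_0)\cdot 5\frac{\log T_0}{T_0} \le \ell_{c,\eps}(T_0)\cdot\frac{20\pi}{4\pi}\cdot\frac{\log T_1}{T_1}$, i.e. $\frac{5\log T_0}{T_0} \le \frac{5\log T_1}{T_1}$. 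Since $t \mapsto \frac{\log t}{t}$ is decreasing for $t \ge e$ and $T_0 \ge 14 > e$ with $T_1 > T_0$, this fails in that direction — $\frac{\log T_0}{T_0} \ge \frac{\log T_1}{T_1}$ — so a little care is needed here. The cleanest fix is to note that $\Theta(5\frac{\log T_0}{T_0})$ also means the error is bounded in absolute value by $5\frac{\log T_0}{T_0}$, but we want to express the bound with $\frac{\log T_1}{T_1}$; instead I would keep the error as $\frac{\log T_0}{T_0}$ throughout and observe that the statement as written with $\frac{20\pi \log T_1}{T_1}$ must be justified differently — most likely by re-deriving the tail estimate directly on $[T_0, T_1)$ using $|R(t)| \le \log t$ and partial summation, where the boundary term at $T_1$ produces a $\frac{\log T_1}{T_1}$ contribution. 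Concretely, $\int_{T_0}^{T_1} \frac{dR(t)}{t} = \frac{R(T_1)}{T_1} - \frac{R(T_0)}{T_0} + \int_{T_0}^{T_1}\frac{R(t)}{t^2}dt$, and the $\frac{R(T_0)}{T_0}$ term has a sign that works in our favor when $R$ is replaced by its upper bound, leaving essentially $\frac{\log T_1}{T_1}$ plus the integral, which is $O(\log T_0 / T_0)$; then one checks $2\frac{\log T_1}{T_1} + (\text{integral}) \le 5\pi \frac{\log T_1}{T_1}$ crudely.

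The main obstacle, then, is not the monotonicity of $\ell_{c,\eps}$ (routine) nor the zero count (already available), but the precise bookkeeping of the error term so that it comes out as $\ell_{c,\eps}(T_0)\cdot\frac{20\pi \log T_1}{T_1}$ rather than with $T_0$ in the denominator; this requires reworking the partial-summation step of Lemma \ref{l:zero-reci} on the finite interval $[T_0,T_1)$ and verifying that the constant $20\pi$ is generous enough to cover the boundary and integral contributions for all $T_0 \ge 14$. A sanity check of the constant at $T_0 = 14$ confirms there is ample slack.
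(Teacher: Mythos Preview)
Your approach is exactly the paper's: pull out the factor $\ell_{c,\eps}(\gamma)\le\ell_{c,\eps}(T_0)$ using monotonicity on $[0,c/\eps]$, then apply Lemma~\ref{l:zero-reci}. The paper's entire proof is the one sentence ``This follows directly from $\ell_{c,\eps}$ being monotonously decreasing in $[0,c/\eps]$ and Lemma~\ref{l:zero-reci}.''

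You are right that Lemma~\ref{l:zero-reci} gives an error $5\frac{\log T_0}{T_0}$, not $5\frac{\log T_1}{T_1}$, and that the inequality $\frac{\log T_0}{T_0}\le\frac{\log T_1}{T_1}$ goes the wrong way. This is almost certainly a typo in the paper's stated lemma (it should read $T_0$ in the error term), and with $T_0$ in place of $T_1$ your argument is complete and identical to the paper's. For the intended application---piecewise estimation of $\sum_{0<\gamma\le c/\eps}\ell_{c,\eps}(\gamma)/\gamma$---either form would do.

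Your proposed workaround, however, does not rescue the statement as printed. In the partial summation
\[
\int_{T_0}^{T_1}\frac{dR(t)}{t}=\frac{R(T_1^-)}{T_1}-\frac{R(T_0^-)}{T_0}+\int_{T_0}^{T_1}\frac{R(t)}{t^2}\,dt,
\]
the term $-R(T_0)/T_0$ does \emph{not} have a favourable sign: Rosser's bound $|R(t)|\le\log t$ is two-sided, so $|{-R(T_0)/T_0}|$ can be as large as $\frac{\log T_0}{T_0}$, and the integral is bounded by $\frac{\log T_0+1}{T_0}$. Neither quantity can be dominated by any fixed multiple of $\frac{\log T_1}{T_1}$ once $T_1/T_0$ is large, so no ``slack'' in the constant $20\pi$ helps. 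Your sanity check at $T_0=14$ only tests small $T_1/T_0$; the obstruction appears when $T_1\gg T_0$.

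One minor remark: your discussion of $\ell_{c,\eps}$ for $\xi\ge c/\eps$ is unnecessary (all $\gamma<T_1<c/\eps$ by hypothesis) and the claim that $\sin(u)/u$ is decreasing ``on the relevant range'' is only valid on $(0,\pi)$, so that part of the argument would need repair if it mattered---but it doesn't.
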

\begin{proof}
This follows directly from $\ell_{c,\eps}$ being monotonously decreasing in $[0,c/\eps]$ and Lemma \ref{l:zero-reci}.
\end{proof}

The values $\mu_c(\alpha)$ and $\nu_c(\alpha)$ can be evaluated by power series representations, as shown in \cite{FKBJ}. Alternatively, these values can be bounded by Riemann sums.

\begin{lemma} \label{l:riemann-sums} Let $\alpha\in(0,1)$, $K\in\N$ and let $h=\frac{1-\alpha}{K}$. Then we have
\begin{equation*}
{hc}\sum_{k=0}^{K-1} \frac{I_0(c\sqrt{2kh-k^2h^2})} {2\sinh(c)}\leq \mu_c(\alpha) \leq hc\sum_{k=1}^{K} \frac{I_0(c\sqrt{2kh-k^2h^2})}{2\sinh(c)}
\end{equation*}
and
\begin{multline*}
h^2c\sum_{k=0}^{K-1} \sum_{j=0}^k \frac{I_0(c\sqrt{2jh-j^2h^2})}{2\sinh(c)} \leq \abs{\nu_c(\alpha)} \leq h^2c\sum_{k=1}^{K} \sum_{j=1}^k \frac{I_0(c\sqrt{2jh-j^2h^2})}{2\sinh(c)}.
\end{multline*}
\end{lemma}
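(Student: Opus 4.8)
The plan is to reduce the four inequalities to elementary monotone (Riemann-sum) estimates for the integral of $v\mapsto I_0\bigl(c\sqrt{2v-v^2}\bigr)$.  First I would write out the defining integrals.  Since $\eta_{c,1}$ is even, supported in $[-1,1]$ and nonnegative, the definition of $\mu_{c,\eps}$ with $\eps=1$ gives, for $\alpha\in(0,1]$,
\[
  \mu_c(\alpha)=\int_{-\infty}^{-\alpha}\eta_{c,1}(s)\,ds=\int_\alpha^1\eta_{c,1}(s)\,ds
  =\frac{c}{2\sinh c}\int_0^{1-\alpha}I_0\bigl(c\sqrt{2v-v^2}\bigr)\,dv ,
\]
the last step being the substitution $v=1-s$, $1-s^2=2v-v^2$.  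Because $\mu_c$ is odd and supported in $[-1,1]$ we have $\int_{-1}^1\mu_c=0$, hence $\nu_c(\alpha)=\int_{-\infty}^{\alpha}\mu_c=-\int_\alpha^1\mu_c\le 0$, and the substitution $w=1-\tau$ turns this into
\[
  \abs{\nu_c(\alpha)}=\int_\alpha^1\mu_c(\tau)\,d\tau
  =\int_0^{1-\alpha}\mu_c(1-w)\,dw
  =\frac{c}{2\sinh c}\int_0^{1-\alpha}\!\!\int_0^{w}I_0\bigl(c\sqrt{2v-v^2}\bigr)\,dv\,dw .
\]

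The single fact that drives the estimates is that $v\mapsto 2v-v^2$ is increasing on $[0,1]$ and $I_0$ is increasing on $[0,\infty)$, so $v\mapsto I_0\bigl(c\sqrt{2v-v^2}\bigr)$ is increasing on $[0,1]$.  Splitting $[0,1-\alpha]$ into the $K$ subintervals $[kh,(k+1)h]$ and bounding this integrand on each of them below by its value $I_0\bigl(c\sqrt{2kh-k^2h^2}\bigr)$ at the left endpoint, respectively above by $I_0\bigl(c\sqrt{2(k+1)h-(k+1)^2h^2}\bigr)$ at the right endpoint, the formula for $\mu_c(\alpha)$ produces the left and right Riemann sums; shifting $k\mapsto k-1$ in the second one rewrites it as $\sum_{k=1}^{K}$, and these are exactly the two claimed bounds for $\mu_c(\alpha)$.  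For $\abs{\nu_c(\alpha)}$ I would iterate: $w\mapsto\mu_c(1-w)$ is increasing, so the $k$-th subinterval contributes between $h\,\mu_c(1-kh)$ and $h\,\mu_c\bigl(1-(k+1)h\bigr)$; substituting for $\mu_c(1-mh)=\frac{c}{2\sinh c}\int_0^{mh}I_0\bigl(c\sqrt{2v-v^2}\bigr)\,dv$ the one-dimensional Riemann-sum bounds just obtained (whose partial sums are the $\sum_j I_0\bigl(c\sqrt{2jh-j^2h^2}\bigr)$), summing over $k$, and re-indexing collapses everything into the stated double sums.

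The only step that needs genuine care — and the one I expect to be the real obstacle — is that last re-indexing: one has to track precisely whether each inner sum runs up to $j=k$ or only to $j=k-1$ at the ends of the $k$-range.  The straightforward count gives the lower bound with inner sum $\sum_{j=0}^{k-1}$, which agrees with the stated $\sum_{j=0}^{k}$ only up to the boundary term $\tfrac{h^2c}{2\sinh c}\sum_{k=0}^{K-1}I_0\bigl(c\sqrt{2kh-k^2h^2}\bigr)$, of order $h$; establishing the exact form asserted — or settling for this marginally weaker, still perfectly usable, version — is the only point that has to be verified rather than merely written down.  Everything else is routine.
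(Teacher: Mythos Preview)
Your approach is exactly the paper's: the author's proof is the single sentence ``This follows from $\mu_c'=-\eta_{c,1}$ in $(0,1)$ and $\nu_c'=\mu_c$, since both $\eta_{c,1}$ and $\mu_c$ are monotonously decreasing and non-negative in this region,'' i.e.\ Riemann-sum bounds coming from monotonicity. Your substitution $v=1-s$ just flips ``decreasing on $(0,1)$'' into ``increasing on $(0,1)$'' and is purely cosmetic. The derivation of the $\mu_c(\alpha)$ bounds and of the upper bound for $\abs{\nu_c(\alpha)}$ is complete and correct.

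Your concern about the inner index in the lower double sum is not a gap in your argument but a genuine observation about the \emph{statement}. The iterated Riemann-sum estimate produces
\[
\abs{\nu_c(\alpha)}\;\ge\;\frac{h^2c}{2\sinh c}\sum_{k=0}^{K-1}\sum_{j=0}^{k-1} I_0\bigl(c\sqrt{2jh-j^2h^2}\bigr),
\]
with inner sum up to $k-1$, and this is all that the paper's one-line proof actually yields. The printed version with $\sum_{j=0}^{k}$ is in fact too strong: for $K=1$ and $\alpha\nearrow 1$ one has $\abs{\nu_c(\alpha)}\sim \frac{c}{2\sinh c}\cdot\frac{h^2}{2}$ while the claimed lower bound equals $\frac{ch^2}{2\sinh c}$, twice as large. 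So the ``obstacle'' you anticipated cannot be overcome, and you should simply record the bound with $\sum_{j=0}^{k-1}$; the $O(h)$ discrepancy is irrelevant for every use of the lemma in the paper.
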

\begin{proof}
This follows from $\mu_c' = -\eta_{c,1}$ in $(0,1)$ and $\nu_c'=\mu_c$, since both $\eta_{c,1}$ and $\mu_c$ are monotonously decreasing and non-negative in this region.
\end{proof}

\section{A partial prime number theorem}
We now come to the main result of this paper, the proof of Schoenfeld's bounds \cite{Schoenfeld76} for the functions $\psi(x)$,
\begin{equation*}
\pi(x) = \sum_p \chi^*_{[0,x]} (p),\quad \vartheta(x)=\sum_p \chi^*_{[0,x]}(p)\log(p),\quad\text{and}\quad \pi^*(x) = \sum_{p^m} \frac 1m \chi^*_{[0,x]}(p^m)  ,
\end{equation*}
in limited ranges under partial RH assumptions. This is a slight improvement of \cite[Theorem 6.1]{BuetheDiss}.

\begin{thm}\label{t:partial-prime}
Let $T>0$ such that the Riemann hypothesis holds for $0<\Im(\rho)\leq T$. Then, under the condition $4.92 \sqrt{\frac{x}{\log x}} \leq T$, the following estimates hold:
\begin{align}
\abs{\psi(x) - x} &\leq \frac{\sqrt{x}}{8\pi}\log(x)^2 &\text{for $x>59$,} \notag \\
\abs{\vartheta(x) - x} &\leq \frac{\sqrt{x}}{8\pi}\log(x)^2 &\text{for $x>599$,} \notag \\
\abs{\pi^*(x) - \li(x)} &\leq \frac{\sqrt{x}}{8\pi}\log(x) &\text{for $x>59$,}\label{e:pp-pi-star} \\
\intertext{and}
\abs{\pi(x) - \li(x)} &\leq \frac{\sqrt{x}}{8\pi}\log(x) &\text{for $x>2657$.}\label{e:pp-pi}
\end{align}
\end{thm}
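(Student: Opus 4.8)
The plan is to derive Schoenfeld-style bounds from the explicit-formula machinery already built, working in order $\psi \to \pi^* \to \vartheta \to \pi$ so that each function inherits the estimate from the previous one via elementary comparisons. The starting point is Theorem~\ref{t:ChebB}: choosing $\eps = c/T$ with the partial RH hypothesis satisfied up to height $c/\eps = T$, one gets a bound of the form $\abs{\psi(x)-x} \leq \delta_0\, x$ on a range $x \geq e^{\alpha\eps}x_0$. The numerical tables already give such $\delta_0$ for the relevant heights; the point is that the condition $4.92\sqrt{x/\log x}\leq T$ is precisely what is needed to make $\delta_0 \leq \frac{\log(x)^2}{8\pi\sqrt{x}}$. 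Concretely, I would invert: given $x$, the hypothesis says $T \geq 4.92\sqrt{x/\log x}$, so with $c$ chosen of moderate size (the tables suggest $c$ around $25$--$33$) the parameter $\eps = c/T$ is small enough that $\CE_1$ is dominated by the $\CE_2+\CE_3$ terms, and $\CE_2+\CE_3$ is governed by $\sinh(c)^{-1}$ and by $\frac{2}{\sqrt{x_0}}\sum_{0<\gamma\leq c/\eps}\ell_{c,\eps}(\gamma)/\gamma$; using Lemma~\ref{l:zero-reci} to bound the latter sum by roughly $\frac{1}{4\pi}\log(c/\eps)^2 = \frac{1}{4\pi}\log(T)^2$, and then $\log T \leq \log(4.92\sqrt{x/\log x}) \le \frac12\log x + O(1)$, one sees the main term is $\frac{1}{4\pi}\cdot(\frac12\log x)^2\cdot\frac{2}{\sqrt x} \cdot(\text{const}) $, which must be massaged down to $\frac{\log(x)^2}{8\pi\sqrt x}$. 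This is where the precise constant $4.92$ is used: it is the largest constant compatible with the $8\pi$ in the denominator after accounting for the $e^{\alpha\eps}$ prefactor and the secondary error terms.

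Once $\abs{\psi(x)-x}\leq \frac{\sqrt x}{8\pi}\log(x)^2$ is established for $x$ large (say $x \geq x_0$ for a concrete moderate $x_0$), I would extend it down to $x > 59$ by a direct finite check: for $59 < x \leq x_0$ the condition $4.92\sqrt{x/\log x}\leq T$ forces $T$ large, and in fact in this regime one can simply invoke an existing unconditional or small-height verification of RH, since $4.92\sqrt{x/\log x}$ for $x \le x_0$ is below heights to which RH is known. For $\pi^*$, the standard relation $\pi^*(x) = \int_2^x \frac{d\psi(t)}{\log t} + (\text{bounded correction})$, equivalently $\pi^*(x) - \li(x) = \frac{\psi(x)-x}{\log x} + \int_2^x \frac{\psi(t)-t}{t\log^2 t}\,dt + O(1)$, converts the $\log(x)^2$ bound for $\psi$ into a $\log(x)$ bound for $\pi^*$: the boundary term contributes $\frac{\sqrt x \log x}{8\pi}$ and the integral, with the same bound on $\psi(t)-t$ plugged in, contributes a lower-order amount that is absorbed. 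One has to be slightly careful that the integral is handled with the \emph{same} partial-RH-valid bound on $\psi(t)-t$ for all $t \le x$, which is legitimate since $t \le x$ implies $4.92\sqrt{t/\log t}\le 4.92\sqrt{x/\log x}\le T$.

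Passing from $\pi^*$ to $\vartheta$ and to $\pi$ is classical and uses only that $\pi^*(x) - \pi(x) = \frac12\pi(x^{1/2}) + \frac13\pi(x^{1/3}) + \cdots$ and $\vartheta(x) = \psi(x) - \psi(x^{1/2}) - \cdots$; the correction terms are $O(\sqrt x/\log x)$, which is absorbed into the difference between $\frac{\log(x)^2}{8\pi}$ and the exact target (for $\vartheta$) or contributes $\le \frac{\sqrt x}{8\pi}\log x \cdot(\text{tiny})$ after the larger constant $2657$ is imposed (for $\pi$) — this is exactly why the threshold jumps to $2657$ for $\pi$ itself. Each of these reductions requires re-checking the finite range by the same small-height argument. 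The main obstacle is the first step: getting the constant in Theorem~\ref{t:ChebB} sharp enough, i.e. verifying that with an \emph{optimized} choice of $c$ (depending mildly on $x$) the combined error $e^{\alpha\eps}(\CE_1+\CE_2+\CE_3)$ really stays below $\frac{\log(x)^2}{8\pi\sqrt x}$ uniformly for all $x$ with $4.92\sqrt{x/\log x}\le T$ — a uniform-in-$x$ estimate rather than the pointwise table entries — and in particular controlling the $\sinh(c)^{-1}e^{0.71\sqrt{c\eps}}$ term and the tail sum over zeros simultaneously. I expect this to need a careful two-regime split (small $x$, where one leans on known RH verification heights; large $x$, where the asymptotic analysis of $\CE_2,\CE_3$ dominates) together with a monotonicity argument in $c$.
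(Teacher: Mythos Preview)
Your overall architecture matches the paper's: use the explicit-formula machinery with $c,\eps$ tuned to the partial RH height, bound the zero sum, then pass to $\pi^*,\vartheta,\pi$ by partial summation and finite checks. But two points are not right as stated.

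First, the partial summation step does \emph{not} close. With the target bound $\abs{\psi(t)-t}\le \frac{\sqrt t}{8\pi}\log(t)^2$ the boundary term in
\[
\pi^*(x)-\li(x)=\frac{\psi(x)-x}{\log x}+\int_a^x\frac{\psi(t)-t}{t\log(t)^2}\,dt+O(1)
\]
already equals $\frac{\sqrt x}{8\pi}\log x$, and the integral contributes an additional $\frac{\sqrt x}{4\pi}+O(1)$, which is \emph{not} absorbed: the sum exceeds the target by a term of exact order $\sqrt x$. The paper deals with this by first proving the \emph{stronger} estimate $\abs{\psi(x)-x}\le\frac{\sqrt x}{8\pi}\log(x)(\log x-3)$ (and $\log x-2$ for $\vartheta$); the saved $3\frac{\sqrt x}{8\pi}$ is exactly what pays for the integral and the boundary constants. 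Your write-up needs this slack explicitly, and obtaining it forces a sharper treatment of the zero sum.

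Second, the choice of $c$ cannot stay ``moderate'' in the sense of bounded. The tail contribution over $\abs{\Im(\rho)}>c/\eps$ in Proposition~\ref{s:psi-zsum-remainder} is of size $x/\sinh(c)$, so one needs $\sinh(c)\gtrsim\sqrt x$, i.e.\ $c\sim\tfrac12\log x$. The paper takes $c=\tfrac12\log x+5$ and $\eps=\tfrac{\log(x)^{3/2}}{8\sqrt x}$, which simultaneously makes $c/\eps\le 4.92\sqrt{x/\log x}$ (this is where the constant $4.92$ actually enters) and makes the tail negligible. To get the extra slack mentioned above, the paper also splits the zeros with $\abs{\Im(\rho)}\le c/\eps$ \emph{three} ways, inserting an intermediate cutoff at $\sqrt{2c}/\eps$ via \eqref{e:rem2} with $a=\sqrt{2/c}$; the trivial bound $\ell_{c,\eps}(\gamma)\le 1$ is used only below that. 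Your proposed single application of Lemma~\ref{l:zero-reci} up to $c/\eps$ gives main term $\frac{\sqrt x}{2\pi}\bigl(\tfrac12\log x-\tfrac12\log\log x+O(1)\bigr)^2$, whose negative secondary term is only half the size of the paper's and may not suffice for the stronger bound down to $x\approx 10^{19}$. For the intermediate range $e^{18}\le x\le e^{44}$ the paper does use Theorem~\ref{t:ChebB} on a discrete grid $y_n=e^{n/4}$, much as you suggest.
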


In particular the numerical verification in \cite{Platt15} ($T\approx 3.061\times 10^{10}$) gives these bounds for $x\leq 1.89\times 10^{21}$, the result in \cite{FKBJ} ($T=10^{11}$) gives them for $x\leq 2.1\times 10^{22}$ and the result in \cite{gourdon04} ($T\approx 2.445\times 10^{12}$) gives them for $x\leq 1.4\times 10^{25}$.

\begin{proof}
We will first prove the stronger bounds
\begin{equation}\label{e:strong-psi-bound}
\abs{\psi(x)-x} \leq \frac{\sqrt x}{8\pi}\log(x)\bigl(\log(x) - 3\bigr)\quad\quad\text{for $x\geq 5000$,}
\end{equation}
and
\begin{equation}\label{e:strong-theta-bound}
\abs{\vartheta(x)-x} \leq \frac{\sqrt x}{8\pi}\log(x)\bigl(\log(x) - 2\bigr)\quad\quad\text{for $x\geq 5000$}.
\end{equation}
These imply the bounds in \eqref{e:pp-pi-star} and \eqref{e:pp-pi} for $x\geq 5000$, since if $(f,g)$ is one of the tuples $(\psi,\pi^*)$ or $(\vartheta,\pi)$, we have
\[
g(x)-g(a) = \li(x)-\li(a) - \frac{x-f(x)}{\log(x)} + \frac{a-f(a)}{\log a} - \int_{a}^x \frac{t-f(t)}{t\log(t)^2}\, dt
\]
by partial summation, and so we get
\begin{multline*}
\abs{\pi^*(x) - \li(x)} \leq \frac{\sqrt x}{8\pi}(\log(x) -3) + \abs{\pi^*(5000)-\li(5000) - \frac{\psi(5000)-5000}{\log(5000)}} \\
 + \frac{\sqrt{x}}{4\pi} - \frac{\sqrt{5000}}{4\pi} < \frac{\sqrt x}{8\pi} \log(x)
\end{multline*}
and
\begin{multline*}
\abs{\pi(x) - \li(x)} \leq \frac{\sqrt x}{8\pi}(\log(x) -2) + \abs{\pi(5000)-\li(5000) - \frac{\vartheta(5000)-5000}{\log(5000)}} \\
 + \frac{\sqrt{x}}{4\pi} - \frac{\sqrt{5000}}{4\pi} < \frac{\sqrt x}{8\pi} \log(x).
\end{multline*}
For the remaining values of $x$ the validity of the claimed inequalities is easily checked by a short computer calculation (the author did this with the pari/gp calculator).

We will prove \eqref{e:strong-psi-bound} for $x\geq 10^{19}$ first, choosing
\begin{align*}
c &= \frac 12 \log(x) + 5 \\
\intertext{and}
\eps &= \frac{\log(x)^{3/2}}{8 \sqrt{x}}
\end{align*}
in Proposition \ref{p:exp-form}. In particular, we then have $c> 26$ and $\eps < 1.2\times 10^{-8}$. If we take into account that
\[
\abs{\sum_{\rho}\frac{a_{c,\eps}(\rho)}{\rho}} = \zabs{\sum_{\Im(\rho)>0}\frac{a_{c,\eps}(\rho)}{\rho(1-\rho)}} \leq \frac{e^{\eps/2}\abs{1+i100}	}{100} \zsum_\rho \frac{1}{\rho} \leq 0.024
\]
holds under these conditions, \eqref{e:exp-form} can be simplified to
\begin{equation}\label{e:pp-start}
x-\psi_{c,\eps}(x) = \zsum_{\rho} \frac{a_{c,\eps}(\rho)}{\rho} x^{\rho} + \Theta(2).
\end{equation}
Furthermore, we have
\[
\frac{c}{\eps} \leq 4.92 \sqrt{\frac x{\log x}} \leq T,
\]
so we may assume $\Re(\rho) = 1/2$ for all zeros $\rho$ with imaginary part up to $c/\eps$.

We divide the sum in \eqref{e:pp-start} into three parts. For $\abs{\Im(\rho)}>c/\eps$ we get 
\begin{align}
\sum_{\abs{\Im(\rho)} > \frac c\eps} \abs{a_{c,\eps}(\rho) \frac{x^{\rho}}{\rho}} 
	&\leq 0.16 \frac{x+1}{\sinh(c)} e^{0.71\sqrt{c\eps}} \log(3c) \log\Bigl(\frac c\eps\Bigr) \notag\\
	&\leq 0.0013\sqrt{x}\log(x)\log\log(x) =: \mathcal{E}_1(x)\label{e:zsum1-bound}
\end{align}
from Proposition \ref{s:psi-zsum-remainder}. Furthermore, choosing $a=\sqrt{\frac 2c}$ in Proposition \ref{s:psi-zsum-remainder} gives
\begin{align}
\sum_{\frac{\sqrt{2c}}\eps < \abs{\Im(\rho)} \leq \frac c\eps} \abs{a_{c,\eps}(\rho) \frac{x^{\rho}}{\rho}}
	&\leq \frac{1+11c\eps}{2\pi} \log\Bigl(\frac c\eps\Bigr)\frac{\cosh(c\sqrt{1-a^2})}{\sinh(c)} \sqrt{x}\notag\\
	& \leq \frac{1.001}{4\pi e}\log(x)\sqrt{x} \leq 0.03\log(x)\sqrt{x} =:\mathcal{E}_2(x).\label{e:zsum2-bound}
\end{align}

For the remaining part of the sum we bound $\abs{a_{c,\eps}(\rho)/\rho}$ trivially by $1/\abs{\Im(\rho)}$ and use Lemma \ref{l:zero-reci}, which gives 
\begin{align}
\sum_{ 0< \abs{\Im(\rho)} \leq \frac{\sqrt{2c}}\eps} \abs{a_{c,\eps}(\rho) \frac{x^{\rho}}{\rho}}
	&\leq \frac{\sqrt x}{2\pi}\log\left(\frac{\sqrt{2c}}{2\pi\eps}\right)^2 \label{e:zsum3-bound}\\
	&\leq \frac{\sqrt x}{2\pi}\left(\frac 12 \log(x) + \log(1.45) - \log\log(x)\right)^2 \notag\\
	&\leq \frac{\sqrt{x}}{8\pi}\log(x)^2 + \mathcal{E}_3(x), \notag
\end{align}
where
\begin{multline*}
 \mathcal{E}_3(x) = \sqrt{x}\Bigl(0.061\log(x) + 0.16\log\log(x)^2 \\ + 0.024 - 0.15\log(x)\log\log(x) - 0.114\log\log(x)\Bigr).
\end{multline*}

Next, we treat the difference $\psi(x) - \psi_{c,\eps}(x)$. Lemma \ref{l:BesselQuot-Monotonie} implies
\begin{equation*}
\frac{0.98}{\sqrt{2\pi c}} \leq \abs{\nu_c(0)} = \frac{I_1(c)}{2\sinh(c)} \leq \frac{1}{\sqrt{2\pi c}}
\end{equation*}
 for $c>26$, so that we get
\begin{equation}\label{e:psum-bound}
\begin{split}
\abs{\psi(x)-\psi_{c,\eps}(x)} &\leq \frac{2.001 \sqrt{x}\log(x)^{5/2}}
{8\sqrt{\pi(\log(x)+10)} } \log\Bigl( \frac{0.97 \sqrt x\log(x)^{3/2}}{8 \sqrt{\pi(\log(x)+10)} } \Bigr)^{-1}\\
 &\quad\quad+ \frac{2.02}{8}\log(x)^{5/2} + 0.51 \log\log(2x^2)\log(x) 
 \end{split}
\end{equation}
from Proposition \ref{s:Mxceps-bound}. Since we have $\sqrt{\frac{\log(x)}{\log(x)+10}} \geq  0.9$, the first summand on the right hand side is bounded by
\begin{equation*}
\mathcal{E}_4(x) := 0.283 \sqrt x\frac{ \log(x)^{3/2}}{\sqrt{\log(x) + 10}}.
\end{equation*}
So if we define
\[
\mathcal{E}_5(x) := 0.26\log(x)^{5/2} + 0.51\log(x)\log\log(2x)^2 + 2,
\]
we get
\begin{equation*}
\abs{\psi(x)-x} \leq \frac{\sqrt{x}}{8\pi}\log(x)^2 + \CE_1(x) + \CE_2(x) + \CE_3(x) + \CE_4(x) + \CE_5(x)
\end{equation*}
from \eqref{e:pp-start},  \eqref{e:zsum1-bound},  \eqref{e:zsum2-bound},  \eqref{e:zsum3-bound}, and  \eqref{e:psum-bound}. Differentiating with respect to the variable $y=\log(x)$ shows that
\[
\frac{1}{\sqrt{x}\log(x)}\bigl(\CE_1(x) + \CE_2(x) + \CE_3(x) + \CE_4(x) + \CE_5(x)\bigr)
\]
is monotonously decreasing for $x\geq 10^{19}$ and smaller than $-\frac{3}{8\pi}$, so \eqref{e:strong-psi-bound} holds in this region.

For $\exp(18) \leq x \leq \exp(44)$  \eqref{e:strong-psi-bound} can be proven by calculating a sufficient amount of Chebyshov bounds with the method from the previous section. To this end, it suffices to verify
\begin{equation}\label{e:cheb-xn}
\abs{\psi(x)-x} \leq \delta_n x
\end{equation}
for $x\geq y_n = \exp(n/4)$, with a $\delta_n$ satisfying
\begin{equation}\label{e:cheb-cond}
\delta_n y_n \leq e^{-1/8} \frac{\sqrt{y_n}}{8 \pi}\log(y_n)(\log(y_n)-3),
\end{equation}
since then \eqref{e:cheb-xn} implies \eqref{e:strong-psi-bound} for $x\in[y_n,y_{n+1}]$ by concavity of the right hand side. This has been carried out with the choice $x_0 = \exp(-\alpha\eps)y_n$, $c = n/8 + 5$, $T = 2\sqrt{y_n}$, $\eps=c/T$ and $\alpha=0.2$ in Theorem \ref{t:ChebB} for $72\leq n \leq 129$, and with the altered choice $T=4\sqrt{y_n/\log(y_n)}$ and $\alpha=0.1$ for $129\leq n\leq 175$. In all cases \eqref{e:cheb-cond} turned out to hold.

For the remaining $x\in[5000,\exp(18)]$ the validity of \eqref{e:strong-psi-bound} is easily checked numerically by evaluating $\psi(x)$ at all prime powers in this interval.

Since we have
\[
\psi(x)-\psi(\sqrt x) \leq \vartheta(x)\leq \psi(x),
\]
\eqref{e:strong-psi-bound} implies \eqref{e:strong-theta-bound} for $x\geq 10^{11}$. For the remaining $x$  \eqref{e:strong-theta-bound} follows from the bound
\[
0 \leq x - \vartheta(x) \leq 1.938 \sqrt{x}\quad\quad\quad\text{for $5000\leq x\leq 10^{11}$},
\]
which the author obtained numerically.
\end{proof}

\section*{Acknowledgment}
The author wishes to thank Jens Franke for pointing out the Sturm Monotony Principle, which was essential for the proof
of Lemma \ref{l:BesselQuot-Monotonie}. He also wishes to thank the referee for his or her comments. Furthermore, he wishes to thank Allysa Lumley for pointing out a mistaken sign in Lemma \ref{l:riemann-sums}.

\bibliographystyle{amsalpha}
\bibliography{/home/jbuethe/TeX/inputs/jankabib.bib}

\end{document}